\def\luca #1{{\color{black}#1}}
\def\@cite#1#2{[{{\bfseries #1}\if@tempswa , #2\fi}]}
\renewcommand{\section}{%
\@startsection{section}{1}{\z@}
{0.5truecm plus -1ex minus -.2ex}%
{1.0ex plus .2ex}{\bfseries\large}}
\def\@seccntformat#1{\csname the#1\endcsname.\ }
\numberwithin{equation}{section} 
\newtheorem{thm}{Theorem}[section]
\newtheorem{lem}[thm]{Lemma}
\theoremstyle{definition}
\newtheorem{df}{Definition}[section]
\newtheorem{remark}{Remark}[section]
\newtheorem*{prth1.1}{Proof of Theorem 1.1}
\newtheorem*{prth1.2}{Proof of Theorem 1.2}
\newtheorem*{prth1.3}{Proof of Theorem 1.3}
\newcommand{\ep}{\varepsilon}
\let\widehat\widehat
\def\Pi{\widehat\pi}
\begin{document}
\footnote[0]
    {{\it Mathematics Subject Classification}\/:
        35B40,  
  		35D30, 
	    35K35, 
	    35M33,  
		80A22  
}%

\footnote[0] 
    {{\it Key words and phrases}\/: 
    nonlocal-to-local convergence; 
    hyperbolic phase-field systems; asymptotic analysis;
    existence of solutions.         
} 
\begin{center}
    \Large{{\bf 
Nonlocal to local convergence of phase field systems 
\\ with inertial term}}
\end{center}
\vspace{5pt}
\begin{center}
Pierluigi Colli\\
    \vspace{2pt}
    Dipartimento di Matematica ``F. Casorati'', 
    Universit\`a di Pavia\\
    and Research Associate at the IMATI -- C.N.R. Pavia\\ 
    via Ferrata 5, 27100 Pavia, Italy\\
    {\tt pierluigi.colli@unipv.it}\\
    \vspace{14pt}
Shunsuke Kurima\,\footnote{Corresponding author}\\
    \vspace{2pt}
    Department of Mathematics, 
    Tokyo University of Science\\
    1-3, Kagurazaka, Shinjuku-ku, Tokyo 162-8601, Japan\\
    {\tt shunsuke.kurima@gmail.com}\\
    \vspace{14pt}
Luca Scarpa\\
    \vspace{2pt}
    Department of Mathematics, 
    Politecnico di Milano\\
    Via E.~Bonardi 9, 20133 Milano, Italy\\
    {\tt luca.scarpa@polimi.it}\\
    \vspace{14pt}
\end{center}


\vspace{2pt}
\newenvironment{summary}
{\vspace{.5\baselineskip}\begin{list}{}{%
     \setlength{\baselineskip}{0.85\baselineskip}
     \setlength{\topsep}{0pt}
     \setlength{\leftmargin}{12mm}
     \setlength{\rightmargin}{12mm}
     \setlength{\listparindent}{0mm}
     \setlength{\itemindent}{\listparindent}
     \setlength{\parsep}{0pt}
     \item\relax}}{\end{list}\vspace{.5\baselineskip}}
\begin{summary}
{\footnotesize {\bf Abstract.} 
This paper deals with a nonlocal model for a hyperbolic phase field system coupling the standard energy balance equation 
for temperature with a dynamic for the phase variable:
\luca{the latter includes an inertial term and 
a nonlocal convolution-type operator where the family of kernels 
depends on a small parameter.} We rigorously study the asymptotic convergence of the system as the approximating parameter tends to zero and 
we \luca{obtain} at the limit the local system with the elliptic laplacian operator acting on the phase variable. Our analysis is based on some asymptotic properties 
\luca{on nonlocal-to-local convergence} 
that have been recently and successfully applied to families of Cahn--Hilliard models.}
\end{summary}
\vspace{10pt}

\newpage

\section{Introduction}\label{Sec1}
In this paper we consider the local problem
%
%
 \begin{equation*}\tag*{(P)}\label{P}
     \begin{cases}
         \theta_{t} + \varphi_{t} - \Delta\theta = f   
         & \mbox{in}\ \Omega\times(0, T), 
 \\[1.5mm]
         \varphi_{tt} + \varphi_{t} - \Delta\varphi
           + \beta(\varphi) + \pi(\varphi) 
         = \theta   
         & \mbox{in}\ \Omega\times(0, T), 
 \\[1.5mm]
         \partial_{\nu}\theta = \partial_{\nu}\varphi = 0                                  
         & \mbox{on}\ \Gamma\times(0, T),
 \\[1.5mm]
        \theta(0) = \theta_{0},\ 
        \varphi(0) = \varphi_{0},\ 
        \varphi_{t}(0) = v_{0}
        & \mbox{in}\ \Omega,   
     \end{cases}
\end{equation*}
and the corresponding family of nonlocal problems
%
%
 \begin{equation*}\tag*{(P)$_{\ep}$}\label{Pep}
     \begin{cases}
         (\theta_{\ep})_{t} + (\varphi_{\ep})_{t} - \Delta\theta_{\ep} = f   
         & \mbox{in}\ \Omega\times(0, T), 
 \\[1.5mm]
         (\varphi_{\ep})_{tt} + (\varphi_{\ep})_{t} 
           + a_{\ep}\varphi_{\ep} - J_{\ep}\ast\varphi_{\ep} 
           + \beta(\varphi_{\ep}) + \pi(\varphi_{\ep}) 
         = \theta_{\ep}   
         & \mbox{in}\ \Omega\times(0, T), 
 \\[1.5mm]
         \partial_{\nu}\theta_{\ep} = 0                                  
         & \mbox{on}\ \Gamma\times(0, T),
 \\[1.5mm]
        \theta_{\ep}(0) = \theta_{0,\ep},\ 
        \varphi_{\ep}(0) = \varphi_{0, \ep},\ 
        (\varphi_{\ep})_{t}(0) = v_{0,\ep}
        & \mbox{in}\ \Omega,
     \end{cases}
\end{equation*}
where $\Omega$ is a bounded domain in $\mathbb{R}^d$ ($d = 2, 3$)
with smooth boundary $\Gamma:=\partial\Omega$, and $T>0$ is a fixed final time.
Here, $\ep \in (0, 1)$ is a given parameter entering the nonlocal problems
through the family $\{J_\ep\}_\ep$ of convolution kernels: for every $\ep\in(0,1)$
the nonlocal terms in \ref{Pep} are defined as $(J_{\ep}\ast\varphi_{\ep})(\cdot) 
    := \int_{\Omega} J_{\ep}(\cdot - y)\varphi_{\ep}(y)\,dy$ and
$a_{\ep}(\cdot) := \int_{\Omega} J_{\ep}(\cdot - y)\,dy$.
\luca{The nonlinearity of the problems \ref{P} and \ref{Pep}
is represented by the sum $\beta+\pi$, where $\beta$ is a maximal monotone graph
in $\mathbb R\times\mathbb R$ and $\pi$ is a Lipschitz continuous function.
Hence, $\beta+\pi$ can be interpreted as the subdifferential in the sense of convex analysis 
of a typical double-well potential.
}

\medskip

The main result of the work is the rigorous proof that, under a precise scaling of
the family of convolution kernels $\{J_\ep\}_\ep$ (see assumption {\bf A1} below),
the solutions of the nonlocal problems \ref{Pep} converge in suitable topologies 
to a weak solution of the local problem \ref{P}. This is contained in Theorem \ref{maintheorem2} below.
As a by-product, this procedure 
also shows existence of weak solutions of the local problem.

\medskip

The class of problems to whom \ref{P} and \ref{Pep} belong is usually ascribed to hyperbolic phase field models,
with hyperbolicity in the phase equation, or phase field systems with inertial term. They have been considered and analyzed in the scientific literature in the last decades: the reader may see, e.g., \cite{cgi, GMPZ, jiang, kur1,
K7, kur2, mirqui, rocschi} to name a few contributions. In particular, a derivation of the local model \ref{P} from the physical laws is performed in \cite{cgi, mirqui}, while the nonlocal model has been recently considered in  \cite{kur1, K7, kur2}. Our aim in this paper is to \luca{relate} the nonlocal model to the local one by a rigorous asymptotic analysis.

\medskip

The literature on nonlocal Cahn--Hilliard type systems has witnessed an
extensive development in recent times:
with no sake of completeness, we mention the works  
\cite{ab-bos-grass-NLCH, gal-gior-grass-NLCH, gal-grass-NLCH} and the references therein,
as well as \cite{gal-gior-gras-SEP,poiatti-SEP} dealing with strict separation property and \cite{frig-gal-gras-DEG} for the case of degenerate mobility.
In the last years an important
stream of research has been addressed to the asymptotic convergence 
of nonlocal Cahn--Hilliard systems towards their local counterparts.
The first results in this direction were established in \cite{MRT18, DRST,DST2021},
where nonlocal-to-local convergence of the some classes of Cahn--Hilliard equations
was established both in the periodic and in the viscous homogeneous Neumann setting,
in the case of non-$W^{1,1}$ kernels. Subsequently, the  analogous results in the case of
$W^{1,1}$ kernels was the subject of \cite{DST_JDE2021}. These contributions were
based both on Gamma-convergence results established by Ponce in \cite{ponce}
and ideas from the seminal works of Sandier and Serfaty in \cite{sand-serf}.
More recently, the study of nonlocal-to-local convergence of phase-field models 
has been expanded and thoroughly developed: we mention, for example, 
the works~\luca{\cite{AT, DRST2023, kur3}} on coupled systems, \cite{ca-el-sk-DEG,el-sk-DEG} on degenerate mobilities, and \cite{AH} on rates of convergence.

\medskip

The main result of this work is a natural development of the stream of research 
on nonlocal-to-local convergence of phase-field models that has been widely carried out 
in the last years. Here, we provide the very first extension of such asymptotic results 
also to the setting of inertial terms and second-order-in-time Allen--Cahn type equations.

\medskip
    
The paper is organized as follows. 
Section~\ref{sec:main} specifies the setting of the work, 
the assumptions on  the data, the concept  of solutions  for both problems, 
and the main nonlocal-to-local convergence result
in Theorem \ref{maintheorem2} below.
In Section~\ref{Sec:prelim} we collect some useful technical preliminiaries 
on nonlocal-to-local convergence of functionals, as well as 
some generalised nonlocal compactness results that will be used in the paper.
Eventually, in Section~\ref{sec:proof} we present the proof of the main result:
this is structured in two parts, consisting of proving uniform estimates on 
the nonlocal  problems independently of $\ep$ and then passing to  the limit as
$\ep\searrow0$.

\section{Setting and main results}
\label{sec:main}
We specify here the variational setting of the work, the 
assumptions, and the main results that we prove.
Throughout the paper, $\Omega\subset\mathbb R^d$, $d\in\{1,2,3\}$,
is a smooth bounded domain and $T>0$ is  a fixed final time.

\medskip

We define the Hilbert spaces 
   $$
   H:=L^2(\Omega), \quad V:=H^1(\Omega), \quad 
   W:=\bigl\{z\in H^2(\Omega)\ |\ \partial_{\nu}z = 0 \quad 
   \mbox{a.e.\ on}\ \partial\Omega\bigr\}
   $$
 with inner products 
 \begin{align*} 
 (u_{1}, u_{2})_{H}&:=\int_{\Omega}u_{1}u_{2}\,dx \quad \mbox{for}\ u_{1}, u_{2} \in H, \\
 (v_{1}, v_{2})_{V}&:=
 \int_{\Omega}\nabla v_{1}\cdot\nabla v_{2}\,dx + \int_{\Omega} v_{1}v_{2}\,dx 
 \quad \mbox{for}\ v_{1}, v_{2} \in V,\\
 (w_{1}, w_{2})_{W}&:=
 \int_{\Omega}\Delta w_{1}\Delta w_{2}\,dx + \int_{\Omega} w_{1}w_{2}\,dx 
 \quad \mbox{for}\ w_{1}, w_{2} \in W,
\end{align*}
 respectively, and with the related Hilbertian norms.
 We denote by $V^{*}$ the dual space of $V$ and by
 $\langle\cdot, \cdot\rangle_{V^*, V}$ duality pairing between $V^*$ and $V$. 
 Moreover, in this paper, the bijective mapping $F : V \to V^{*}$ and 
 the inner product in $V^{*}$ are defined as 
    \begin{alignat*}{2}
    \langle Fv_{1}, v_{2} \rangle_{V^*, V} &:= 
    (v_{1}, v_{2})_{V} \quad&&\mbox{for}\ v_{1}, v_{2}\in V,   
    \\[1mm]
    (v_{1}^{*}, v_{2}^{*})_{V^{*}} &:= 
    \left\langle v_{1}^{*}, F^{-1}v_{2}^{*} 
    \right\rangle_{V^*, V} 
    \quad&&\mbox{for}\ v_{1}^{*}, v_{2}^{*}\in V^{*}. 
    \end{alignat*}  
 We identify as usual the space $H$ with its dual $H^*$, whence we have that
 \[
 W\hookrightarrow V \hookrightarrow H \hookrightarrow V^*.
 \]
with dense, continuous, and compact inclusions. Please note that we will use the same notation $\| \cdot\|_X $ for the norm in a normed space $X$ and for any power of it, e.g., for $X^d$.
\medskip

The  following assumptions will be in order.
\begin{enumerate} 
\setlength{\itemsep}{1mm}
\item[{\bf A1}] 
\luca{$\alpha \in [0, d-1]$} is a fixed number and
$\rho : [0, +\infty) \to [0, +\infty)$ 
is of class $C^1$ such that 
\[
s \mapsto |\rho'(s)|s^{d-1-\alpha} \in L^1(0, + \infty),
\qquad
\luca{s \mapsto \rho(s)s^{d-2-\alpha} \in L^1(0, + \infty),}
\]
with the renormalization 
\[
\int_{0}^{+\infty} \rho(s)s^{d+1-\alpha}\,ds = c_{d}, 
\qquad\text{where}\qquad
c_{d} := \dfrac{2}
               {\int_{S^{d-1}}|e_{1}\cdot\sigma|^2\,d{\cal H}^{d-1}(\sigma)}.
\]
The family of mollifiers $\{\rho_{\ep}\}_{\ep>0}$ is defined, for every $\ep>0$, as 
\[
\rho_{\ep}(r) := \frac{1}{\ep^d}\rho\left(\frac{r}{\ep}\right) 
\quad \mbox{for}\ r \geq 0.
\]
The family of convolution kernels $\{J_\ep\}_{\ep>0}$
is then defined, for every $\ep>0$, as
\[
J_{\ep} : \mathbb{R}^{d} \to \mathbb{R}, \qquad
J_{\ep}(z) := \dfrac{\rho_{\ep}(|z|)}{\ep^{2-\alpha}|z|^{\alpha}}, 
\quad z \in \mathbb{R}^{d}.
\]
Noting that $\{J_{\ep}\}_{\ep>0}\subset W^{1, 1}(\mathbb{R}^{d})$ 
(see \luca{the proof of} 
\cite[Lemma 3.1]{DST_JDE2021}), it is well defined, for every $\ep>0$,
the nonlocal energy contribution $E_\ep:H\to[0,+\infty)$ as
\begin{equation}
  E_\ep(u):=\frac14
  \int_{\Omega}\int_{\Omega} 
  J_{\ep}(x-y)|u(x) - u(y)|^2\,dxdy, \quad u\in H,
\label{pier1}
\end{equation}
as well as the convolution term $a_\ep:=J_\ep*1$ and
the nonlocal (linear) operator $B_\ep:H\to H$ as
\begin{equation}
  B_\ep u:=a_\ep u - J_\ep*u, \quad u\in H. \label{pier2}
\end{equation}

\item[{\bf A2}] $\beta : \mathbb{R} \to \mathbb{R}$                                
is non-decreasing and locally Lipschitz continuous.
We denote by $\widehat{\beta} : \mathbb{R} \to [0, +\infty)$ 
the unique $C^1$ convex function such that 
$\widehat{\beta}(0) = 0$ and $\widehat\beta'=\beta$: 
as usual,  we identify
$\beta$ with the subdifferential $\partial\widehat\beta$
in the sense of monotone analysis. \luca{We further assume that there are
an exponent $q>1$ and a constant $c_{\beta} > 0$ such that 
\[
  |\beta(r)|^q \leq c_{\beta}\bigl( 1 + \widehat\beta(r) \bigr)
\quad\forall\,r\in\mathbb R,
\]
where also $q\geq 6/5$ if $d=3$.}

\item[{\bf A3}] $\pi : \mathbb{R} \to \mathbb{R}$ 
is Lipschitz continuous. 

\item[{\bf A4}] $f \in L^2(0, T; H)$.
                 
\item[{\bf A5}] The initial data satisfy 
\begin{align*}
&( \theta_0, \varphi_0, v_0) \in V\times V\times H,\\
&\{(\theta_{0,\ep}, \varphi_{0,\ep}, v_{0,\ep})\}_{\ep>0} \subset 
V\times L^\infty(\Omega) \times L^\infty(\Omega).
\end{align*}
Moreover, there exists a constant $c_{1} > 0$ such that
\[
\|\theta_{0,\ep}\|_V^2 + \|\varphi_{0, \ep}\|_H^2 +  E_\ep(\varphi_{0,\ep})
 + \|\widehat{\beta}(\varphi_{0, \ep})\|_{L^1(\Omega)}
 +\|v_{0,\ep}\|_{H}^2\leq c_{1}
 \quad\forall\,\ep>0,
\]
and it holds that
	\begin{align*}
	\lim_{\ep\searrow0}\left[
	\|\theta_{0,\ep}-\theta_0\|_H^2 + 
	\|\varphi_{0,\ep}-\varphi_0\|_H^2
	+\|v_{0,\ep}-v_0\|_{V^*}^2
	\right]=0.
	\end{align*}  
\end{enumerate}

\begin{remark}
  Concerning assumption {\bf A1}, 
  the reader may compare with the analogous requirements in
  \cite{ponce} and \cite{DST_JDE2021}.
  \luca{In particular, it is immediate to check that the 
  generalisation to $\alpha\in[0,d-1]$ follows from the proof
  of \cite[Lemma 3.1]{DST_JDE2021} by using the integrability 
  requirements on $\rho$ in  {\bf A1}.} Next, let  
  us comment on assumption {\bf A2}. 
  First of all, we recall the embeddings
  $V\hookrightarrow L^\infty(\Omega)$ in $d=1$, 
  $V\hookrightarrow L^p(\Omega)$ for all $p\in[1,+\infty)$ in $d=2$, 
  and $V\hookrightarrow L^6(\Omega)$ in $d=3$. 
  Hence, it readily holds that
  \[
  L^q(\Omega)\subset V^*
  \qquad\text{in every space dimension } d=1,2,3,
  \]
  \luca{Secondly, let us briefly discuss 
  the growth condition in {\bf A2}: note that $\beta$ can have any polynomial
  growth at infinity in dimensions $d=1,2$, and any polynomial growth up to 
  $5$-th order in dimension $d=3$.}
  In particular, we note that the classical fourth order double-well potential, 
  leading to $\beta (r) = r^3$ and $\pi (r) = - r$, with $r\in \mathbb{R}$,  
  satisfies {\bf A2}--{\bf A3} in every space dimension.
\end{remark}

\medskip

We define solutions of the problems \ref{Pep} and 
weak solutions of \ref{P} as follows. 
%
%
%
 \begin{df}      \label{sol:nonloc}   
 Let $\ep>0$.
 A {\it solution} of problem \ref{Pep} is a pair $(\theta_{\ep}, \varphi_{\ep})$ with 
    \begin{align*}
    \theta_{\ep} &\in H^1(0, T; H) 
                                   \cap L^{\infty}(0, T; V) \cap L^{2}(0, T; W),  
    \\
    \varphi_{\ep} &\in W^{2, \infty}(0, T; H) 
                                   \cap W^{2, 2}(0, T; L^{\infty}(\Omega)) \ 
                                       {}( \subset W^{1, \infty}(0, T; L^{\infty}(\Omega))\,){},\\
    \beta(\varphi_\ep) &\in L^\infty(0,T; H\cap L^q(\Omega)),
    \end{align*}
 such that
    \begin{alignat*}{2}
        &(\theta_{\ep})_{t} + (\varphi_{\ep})_{t} - \Delta\theta_{\ep} = f 
                           \quad&&\mbox{a.e.\ in}\ \Omega\times(0, T), 
     \\[2mm]
        &(\varphi_{\ep})_{tt} + (\varphi_{\ep})_{t} 
           + B_\ep\varphi_\ep
           + \beta(\varphi_{\ep}) + \pi(\varphi_{\ep}) 
         = \theta_{\ep}
                \quad&&\mbox{a.e.\ in}\ \Omega\times(0, T), 
     \\[2mm]
        &\theta_{\ep}(0) = \theta_{0,\ep},\ 
        \varphi_{\ep}(0) = \varphi_{0, \ep},\ 
        (\varphi_{\ep})_{t}(0) = v_{0,\ep} 
                       \quad&&\mbox{in}\ H. 
     \end{alignat*}
 \end{df}
%
%
%
 \begin{df}         \label{sol:loc}   
 A {\it weak solution} of problem \ref{P} is a pair $(\theta, \varphi)$ with
    \begin{align*}
    \theta &\in H^1(0, T; H) 
                                \cap L^{\infty}(0, T; V) \cap L^{2}(0, T; W), 
    \\ 
    \varphi &\in W^{2, \infty}(0, T; V^{*}) 
                               \cap W^{1, \infty}(0, T; H) 
                                  \cap L^{\infty}(0, T; V),\\
    \beta(\varphi) &\in L^\infty(0,T; L^q(\Omega)),
    \end{align*}
 such that
    \begin{alignat*}{2}
        &\theta_{t} + \varphi_{t} - \Delta\theta = f  
        \quad&&\mbox{a.e.\ in}\ \Omega\times(0, T),  
     \\[2mm]
        &\langle \varphi_{tt}, w \rangle_{V^{*}, V} 
           + (\varphi_{t}, w)_{H} 
           + \int_\Omega \nabla\varphi\cdot \nabla w
           \\[1mm]
          &\hspace{5mm}
           + \langle \beta(\varphi), 
                             w \rangle_{V^*, V} 
           + (\pi(\varphi), w)_{H} 
          = (\theta, w)_{H}
           \quad&&\mbox{for all}\ w \in V, \ \mbox{a.e.\ in}\ (0, T),
     \\[2mm]
        &\theta(0) = \theta_{0},\ 
        \varphi(0) = \varphi_{0},\ 
        \varphi_{t}(0) = v_{0} 
                       \quad&&\mbox{in}\ H \mbox{ or } V^*. 
     \end{alignat*}
 \end{df}

\begin{remark}
  Note that the variational formulation in Definition~\ref{sol:loc}   
  makes sense since in our framework it holds that $L^q(\Omega)\subset V^*$
  for dimensions $1,2,3$.
\end{remark}

Existence and uniqueness of solutions for the nonlocal problem \ref{Pep}
have been obtained in \cite{K7} in a very general  framework.
In particular, we  have the following.
\begin{thm}[\cite{K7}]\label{maintheorem1}
Assume {\rm {\bf A1}-{\bf A5}}. 
Then, 
for all $\ep \in (0, 1)$ 
there exists a unique solution 
$(\theta_{\ep}, \varphi_{\ep})$ of {\rm \ref{Pep}}. 
\end{thm}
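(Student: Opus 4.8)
The plan is to prove Theorem~\ref{maintheorem1} by a standard Faedo--Galerkin approximation combined with monotonicity/compactness arguments, since the parameter $\ep$ is fixed and the nonlocal operator $B_\ep$ is a bounded linear operator on $H$ (indeed $B_\ep\in\mathcal L(H,H)$ because $a_\ep\in L^\infty(\Omega)$ and convolution with $J_\ep\in W^{1,1}(\mathbb R^d)$ is bounded on $H$). First I would introduce a Galerkin basis: since $-\Delta$ with homogeneous Neumann conditions has a complete orthonormal system $\{e_j\}_{j\ge1}$ of eigenfunctions in $H$ which is also orthogonal in $V$ and in $W$, I set $V_n:=\mathrm{span}\{e_1,\dots,e_n\}$ and look for $\theta_n,\varphi_n\in C^1([0,T_n];V_n)$ (with $\varphi_n$ actually $C^2$ in time) solving the finite-dimensional system obtained by projecting both equations of \ref{Pep} onto $V_n$, with projected initial data $P_n\theta_{0,\ep}$, $P_n\varphi_{0,\ep}$, $P_n v_{0,\ep}$. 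Because $\beta$ and $\pi$ are locally Lipschitz, the resulting ODE system has a unique local-in-time $C^1$ solution by Cauchy--Lipschitz; global existence will follow once we have the a priori estimates.

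The core of the argument is the a priori estimates, uniform in $n$ (for fixed $\ep$). The first, energy-type estimate: test the $\theta$-equation with $\theta_n$ and the $\varphi$-equation with $(\varphi_n)_t$, add, and integrate in time. The coupling terms $(\varphi_n)_t\,\theta_n$ cancel, and using $\frac{d}{dt}\int_\Omega\widehat\beta(\varphi_n)=\int_\Omega\beta(\varphi_n)(\varphi_n)_t$, the nonquadratic term is handled; the nonlocal term $((B_\ep\varphi_n)_t\text{-type})$ contributes $\frac{d}{dt}E_\ep(\varphi_n)$ since $E_\ep$ is the quadratic form associated with $B_\ep$ (i.e.\ $(B_\ep u,u)_H=2E_\ep(u)\ge0$), which is the right monotone structure. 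The Lipschitz term $\pi(\varphi_n)(\varphi_n)_t$ and the source $f\theta_n$ are absorbed by Gronwall. This yields bounds on $\theta_n$ in $L^\infty(0,T;H)\cap L^2(0,T;V)$, on $\varphi_n$ in $W^{1,\infty}(0,T;H)\cap L^\infty(0,T;\text{``}E_\ep\text{''})$, and on $\widehat\beta(\varphi_n)$ in $L^\infty(0,T;L^1(\Omega))$. Then growth assumption {\bf A2} gives $\beta(\varphi_n)$ bounded in $L^\infty(0,T;L^q(\Omega))$. A second-level estimate is needed to reach the regularity in Definition~\ref{sol:nonloc}: differentiating the $\theta$-equation structure (or testing with $(\theta_n)_t$ and $-\Delta\theta_n$) gives $\theta_n$ bounded in $H^1(0,T;H)\cap L^\infty(0,T;V)\cap L^2(0,T;W)$; for $\varphi_n$, the $L^\infty(\Omega)$-in-space estimates rely on testing the $\varphi$-equation in a bootstrap fashion, using that $a_\ep$, $J_\ep$, $\theta_n$ and the already-controlled lower-order terms sit in $L^\infty(0,T;H)$ or better, together with the local Lipschitz character of $\beta,\pi$ and the $L^\infty$ bound on $\varphi_{0,\ep},v_{0,\ep}$ from {\bf A5}; here one may also differentiate the $\varphi$-equation in time to recover $(\varphi_n)_{tt}$ bounds and use a Gronwall in the $L^\infty(\Omega)$-norm.

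Having these bounds, I pass to the limit $n\to\infty$: weak-$*$ compactness gives subsequential limits $\theta,\varphi$ in the appropriate spaces; the Aubin--Lions--Simon lemma gives strong convergence $\theta_n\to\theta$ in $C^0([0,T];H)$ and $\varphi_n\to\varphi$ (and $(\varphi_n)_t\to\varphi_t$) in suitable strong topologies, enough to identify the nonlinear terms $\beta(\varphi_n)\to\beta(\varphi)$ and $\pi(\varphi_n)\to\pi(\varphi)$ (for $\beta$ one uses strong $L^p$ convergence together with the $L^q$ bound and Vitali/dominated convergence, or maximal monotonicity with a weak limit identification). The linear nonlocal operator $B_\ep$ passes to the limit trivially by boundedness. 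This produces a solution in the sense of Definition~\ref{sol:nonloc}; the initial conditions are recovered from the continuity in time. Uniqueness is obtained by taking two solutions, writing the system for the differences, testing the $\theta$-difference equation with the $\theta$-difference and the $\varphi$-difference equation with the $\varphi_t$-difference, using $(B_\ep w,w)_H\ge0$, the Lipschitz continuity of $\pi$ and the \emph{local} Lipschitz continuity of $\beta$ (legitimate because the solutions are bounded in $L^\infty(\Omega\times(0,T))$), and Gronwall.

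The main obstacle I expect is the second-level estimate giving the strong regularity $\varphi_\ep\in W^{2,\infty}(0,T;H)\cap W^{2,2}(0,T;L^\infty(\Omega))$ required by Definition~\ref{sol:nonloc}: controlling $(\varphi_n)_{tt}$ in $L^\infty(0,T;H)$ and especially the spatial $L^\infty(\Omega)$-bounds needs careful use of the fact that, unlike the local Laplacian, the nonlocal operator $B_\ep$ does not smooth — so one must exploit the Lipschitz/$L^\infty$ structure of all other terms and the $L^\infty$ regularity of the data in {\bf A5}, likely via a Gronwall argument directly in $L^\infty(\Omega)$ after differentiating the phase equation in time. Everything else is routine. (In fact this theorem is quoted from \cite{K7}, so the cleanest route is simply to invoke that reference; the above is the self-contained argument one would reconstruct.)
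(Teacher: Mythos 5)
The paper offers no proof of Theorem~\ref{maintheorem1} at all: it is imported verbatim from \cite{K7}, whose title already indicates that the existence proof there is run through a \emph{time discretization} (Rothe/implicit Euler) scheme rather than a Faedo--Galerkin one. So your sketch is necessarily a different route from the source, and your closing remark that the cleanest option is simply to invoke \cite{K7} is exactly what the paper does.

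As a self-contained reconstruction, your outline is standard and mostly sound (the energy estimate with the cancellation of the coupling terms, the identity $(B_\ep\varphi,\varphi_t)_H=\frac{d}{dt}E_\ep(\varphi)$ from Lemma~\ref{L1}, the identification of $\beta$ and $\pi$ in the limit, and the uniqueness argument exploiting the $L^\infty$ bound to use the local Lipschitz continuity of $\beta$). The one genuine weak point is precisely the step you flag as the ``main obstacle'', and your proposed fix does not work as stated: a spectral Galerkin scheme based on Neumann eigenfunctions is structurally incompatible with the $W^{2,2}(0,T;L^\infty(\Omega))$ regularity required by Definition~\ref{sol:nonloc}, because the projections $P_n$ are not uniformly bounded on $L^\infty(\Omega)$; the $L^\infty$ bounds on $\varphi_{0,\ep}$, $v_{0,\ep}$ from {\bf A5} are destroyed at the approximate level, and one cannot ``run Gronwall in the $L^\infty(\Omega)$-norm'' on $\varphi_n$, nor test with the sign-type functions such an argument would need while staying in $V_n$. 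The natural repairs are either (i) the time discretization of \cite{K7}, which keeps the phase equation as a pointwise-in-$x$ relation at each step, or (ii) decoupling the system and treating the second equation as a second-order ODE in the Banach space $L^\infty(\Omega)$ --- legitimate since $B_\ep$ is bounded on $L^\infty(\Omega)$ ($a_\ep\in L^\infty$, $J_\ep\in L^1$) and $\beta,\pi$ are Lipschitz on bounded sets --- solved by contraction and continued globally via a pointwise-in-$x$ energy identity (multiply by $\varphi_t(x,t)$, take the supremum in $x$, use $\theta\in L^2(0,T;L^\infty(\Omega))$ from $W\hookrightarrow L^\infty(\Omega)$, and apply Gronwall). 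With that modification the rest of your argument goes through; as written, the Galerkin route would only deliver a weaker solution notion than the one Theorem~\ref{maintheorem1} asserts.
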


The main result of the paper is the convergence of the solutions 
of the nonlocal problem \ref{Pep} to those of the local one \ref{P}.
As a byproduct, this shows also existence of solutions to the 
local problem \ref{P}.
\begin{thm}\label{maintheorem2}
Assume {\rm {\bf A1}--{\bf A5}}
and let $\{(\theta_\ep, \varphi_\ep)\}_{0<\ep<1}$ be the family
of solutions  of the nonlocal problems \{{\rm \ref{Pep}}\}$_{0<\ep<1}$.
Then, there exists a weak solution $(\theta,\varphi)$ of the local 
problem {\rm\ref{P}} and a sequence $\{\ep_j\}_{j\in\mathbb N}$
such that, as $j\to\infty$, $\ep_j\searrow0$ and
\begin{align*}
\theta_{\ep_j} \to \theta 
\quad&\mbox{weakly in } H^1(0, T; H) 
\cap L^2(0, T; W),
\\
\theta_{\ep_j} \to \theta 
\quad&\mbox{weakly* in } L^{\infty}(0, T; V)  \mbox{ and strongly in } C^0 ([0, T]; H),
\\
\varphi_{\ep_j} \to \varphi 
\quad&\mbox{strongly in } C^0([0, T]; H), 
\\
(\varphi_{\ep_j})_{t} \to \varphi_{t} 
\quad&\mbox{weakly* in } L^\infty(0, T; H)  \mbox{ and strongly in } C^0([0, T]; V^*),
\\
(\varphi_{\ep_j})_{tt} \to \varphi_{tt} 
\quad&\mbox{weakly* in } L^{\infty}(0, T; V^*), 
\\
B_{\ep_j}\varphi_{\ep_j} \to B\varphi 
\quad&\mbox{weakly* in } L^{\infty}(0, T; V^*), 
\\
\beta(\varphi_{\ep_j}) \to \beta (\varphi) 
\quad&\mbox{weakly* in } L^{\infty}(0, T; L^q(\Omega)), 
\end{align*}
where the operator $B:V\to V^*$ is specified by 
\begin{equation}
\langle Bu_1, u_2 \rangle_{V^{*}, V} 
= \int_{\Omega} \nabla u_1(x)\cdot\nabla u_2(x)\,dx 
\quad \mbox{for } u_1,u_2 \in V.
\label{pier4}
\end{equation}
\end{thm}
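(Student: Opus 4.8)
The plan is to follow the by-now classical two-step scheme for nonlocal-to-local convergence, adapted to the hyperbolic setting. \textbf{Step 1: uniform a priori estimates.} First I would work on the nonlocal problems \ref{Pep} and derive bounds independent of $\ep$. The natural test procedure is to test the temperature equation by $\theta_\ep$ and the phase equation by $(\varphi_\ep)_t$; adding the two and using the cancellation of the cross terms $(\varphi_\ep)_t\theta_\ep$, I would obtain an energy identity of the form
\[
\frac{d}{dt}\Bigl[\tfrac12\|\theta_\ep\|_H^2 + \tfrac12\|(\varphi_\ep)_t\|_H^2 + E_\ep(\varphi_\ep) + \textstyle\int_\Omega\widehat\beta(\varphi_\ep)\Bigr] + \|\nabla\theta_\ep\|_H^2 + \|(\varphi_\ep)_t\|_H^2 = \int_\Omega\bigl(f\theta_\ep - \pi(\varphi_\ep)(\varphi_\ep)_t\bigr),
\]
using that $\langle B_\ep\varphi_\ep,(\varphi_\ep)_t\rangle = \frac{d}{dt}E_\ep(\varphi_\ep)$ (this is the quadratic structure of $E_\ep$ recalled in \eqref{pier1}--\eqref{pier2}). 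With Gronwall and the uniform bound on the initial data from \textbf{A5} (in particular the uniform bound on $E_\ep(\varphi_{0,\ep})$, which is exactly where assumption \textbf{A1} enters, via the equi-coercivity/equi-boundedness of $E_\ep$ from \cite{ponce,DST_JDE2021}), this yields uniform estimates for $\theta_\ep$ in $L^\infty(0,T;V)$ — after a further test by $\theta_\ep$ using the equation again, or by reading off $\|\nabla\theta_\ep\|$ in $L^2$ and then bootstrapping — and for $\varphi_\ep$ in $W^{1,\infty}(0,T;H)$, together with $E_\ep(\varphi_\ep)$ and $\widehat\beta(\varphi_\ep)$ uniformly bounded in $L^\infty(0,T)$. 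Parabolic regularity on the temperature equation (testing by $-\Delta\theta_\ep$ or by $(\theta_\ep)_t$) gives $\theta_\ep$ bounded in $H^1(0,T;H)\cap L^2(0,T;W)$. For the second-order term, I would test the phase equation by $(\varphi_\ep)_t$ in the $V^*$-duality, or more simply comparison in the equation: $(\varphi_\ep)_{tt} = \theta_\ep - (\varphi_\ep)_t - B_\ep\varphi_\ep - \beta(\varphi_\ep) - \pi(\varphi_\ep)$, so I need $B_\ep\varphi_\ep$ and $\beta(\varphi_\ep)$ bounded in $L^\infty(0,T;V^*)$. The bound on $B_\ep\varphi_\ep$ in $V^*$ is a key technical point and is supplied by the nonlocal functional-analytic results collected in Section~\ref{Sec:prelim} (the fact that $\langle B_\ep u, v\rangle$ is controlled by $E_\ep(u)^{1/2}$ times a nonlocal seminorm of $v$, which in turn is dominated by $\|v\|_V$ up to a constant uniform in $\ep$). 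For $\beta(\varphi_\ep)$ the growth condition in \textbf{A2}, $|\beta(r)|^q\le c_\beta(1+\widehat\beta(r))$, converts the $L^\infty(0,T;L^1)$ bound on $\widehat\beta(\varphi_\ep)$ into the desired $L^\infty(0,T;L^q(\Omega))\hookrightarrow L^\infty(0,T;V^*)$ bound. Thus $(\varphi_\ep)_{tt}$ is bounded in $L^\infty(0,T;V^*)$. One also needs $\varphi_\ep$ bounded in $L^\infty(0,T;V)$: this follows from the equi-coercivity of $E_\ep$ (Ponce-type), which bounds $\|\varphi_\ep\|_{L^\infty(0,T;V)}$ in terms of $E_\ep(\varphi_\ep)+\|\varphi_\ep\|_H^2$.

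\textbf{Step 2: passage to the limit.} From the uniform estimates and the compact embeddings $W\hookrightarrow V\hookrightarrow H\hookrightarrow V^*$, I would extract (Banach--Alaoglu plus Aubin--Lions--Simon) a subsequence $\ep_j\searrow0$ along which $\theta_{\ep_j}\to\theta$ weakly in $H^1(0,T;H)\cap L^2(0,T;W)$, weakly* in $L^\infty(0,T;V)$ and strongly in $C^0([0,T];H)$; $\varphi_{\ep_j}\to\varphi$ strongly in $C^0([0,T];H)$ (from $\varphi_\ep$ bounded in $W^{1,\infty}(0,T;H)\cap L^\infty(0,T;V)$, hence in $C^{0,1/2}([0,T];H)$ plus compactness $V\hookrightarrow\hookrightarrow H$); $(\varphi_{\ep_j})_t\to\varphi_t$ weakly* in $L^\infty(0,T;H)$ and strongly in $C^0([0,T];V^*)$; $(\varphi_{\ep_j})_{tt}\to\varphi_{tt}$ weakly* in $L^\infty(0,T;V^*)$; $\beta(\varphi_{\ep_j})\rightharpoonup\xi$ weakly* in $L^\infty(0,T;L^q(\Omega))$; and $B_{\ep_j}\varphi_{\ep_j}\rightharpoonup\eta$ weakly* in $L^\infty(0,T;V^*)$. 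Identifying $\xi=\beta(\varphi)$ uses the strong convergence $\varphi_{\ep_j}\to\varphi$ in $C^0([0,T];H)$ (hence a.e. along a further subsequence) together with the maximal monotonicity of $\beta$ and a standard weak--strong (Minty-type) argument — here one needs to know that $\varphi$ has enough integrability that $\beta(\varphi)\in L^q(\Omega)$ a.e., which comes from lower semicontinuity of $\int\widehat\beta$. Identifying $\eta=B\varphi$ with $B$ as in \eqref{pier4} is precisely the nonlocal-to-local convergence statement: using the $\Gamma$-convergence $E_\ep\to\frac12\int_\Omega|\nabla\cdot|^2$ à la Ponce and the companion result (from Section~\ref{Sec:prelim}) that $B_\ep u_\ep\rightharpoonup -\Delta u$ in $V^*$ whenever $u_\ep\to u$ strongly in $H$ with $\sup_\ep E_\ep(u_\ep)<\infty$; applied with $u_\ep=\varphi_\ep(t)$ and integrated in time (or via the Sandier--Serfaty scheme linking the energy dissipation inequalities), this gives $\eta=B\varphi$ and in particular $\varphi\in L^\infty(0,T;V)$. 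Passing to the limit in the (linear) temperature equation is immediate from the weak convergences; passing to the limit in the weak formulation of the phase equation uses all of the above plus $\pi(\varphi_{\ep_j})\to\pi(\varphi)$ strongly in $C^0([0,T];H)$ by Lipschitz continuity (\textbf{A3}). Finally, the initial conditions pass to the limit because of the $C^0([0,T];H)$ (resp. $C^0([0,T];V^*)$) convergences combined with the convergence of the approximating initial data in \textbf{A5}.

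\textbf{Main obstacle.} The crux is the identification $B_{\ep_j}\varphi_{\ep_j}\to B\varphi$ together with the recovery of the spatial regularity $\varphi\in L^\infty(0,T;V)$ — i.e. making the nonlocal-to-local passage rigorous in the presence of the inertial term. Unlike the Cahn--Hilliard case, here the phase equation is second order in time and the test function $(\varphi_\ep)_t$ is only in $L^\infty(0,T;H)$, so the energy identity must be handled with care (e.g. regularising in time or using the $V^*$-$V$ duality for $\langle(\varphi_\ep)_{tt},(\varphi_\ep)_t\rangle$). The Sandier--Serfaty / Ponce machinery from Section~\ref{Sec:prelim} is designed exactly to produce the inequality $\liminf_j \int_0^T E_{\ep_j}(\varphi_{\ep_j})\,dt \ge \int_0^T \frac12\|\nabla\varphi\|_H^2\,dt$ and, coupled with a matching upper bound coming from the limit energy inequality for the hyperbolic system, to force strong convergence of the energies and hence the identification of $\eta$; adapting that coupling to the inertial energy $\frac12\|(\varphi_\ep)_t\|_H^2 + E_\ep(\varphi_\ep) + \int\widehat\beta(\varphi_\ep)$ is the genuinely new point of the paper. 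Once this is in place, identifying $\beta(\varphi)$ by monotonicity and closing the weak formulation are routine.
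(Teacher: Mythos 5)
Your overall two-step scheme (uniform estimates, then compactness and identification of the limits via the Ponce-type results of Section~\ref{Sec:prelim}) matches the paper's, and most individual steps --- the energy estimate obtained by testing with $\theta_\ep$ and $(\varphi_\ep)_t$, the parabolic regularity for $\theta_\ep$, the $V^*$-bound on $B_\ep\varphi_\ep$ via the Bourgain--Brezis--Mironescu inequality, the comparison argument for $(\varphi_\ep)_{tt}$, the growth condition of {\bf A2} for $\beta(\varphi_\ep)$, and the use of the pointwise-in-time convergence $B_{\ep_j}\varphi_{\ep_j}(t)\rightharpoonup B\varphi(t)$ in $V^*$ (Lemma~\ref{L3}) plus dominated convergence to identify the weak* limit of $B_{\ep_j}\varphi_{\ep_j}$ --- are exactly what the paper does.

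There is, however, one genuine gap. You assert that $\varphi_\ep$ is bounded in $L^\infty(0,T;V)$ uniformly in $\ep$ ``by equi-coercivity of $E_\ep$'', and you then derive the strong convergence $\varphi_{\ep_j}\to\varphi$ in $C^0([0,T];H)$ from Aubin--Lions with the compact embedding $V\hookrightarrow H$. This is false: for each fixed $\ep>0$ the functional $E_\ep$ in \eqref{pier1} is finite (indeed continuous) on all of $H$, so a uniform bound on $E_\ep(\varphi_\ep)$ carries no $V$-regularity for $\varphi_\ep$ --- the solutions of \ref{Pep} have no spatial Sobolev regularity, and only the \emph{limit} $\varphi$ lands in $V$, via the $\Gamma$-liminf inequality of Lemma~\ref{L2}. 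Since the strong convergence in $C^0([0,T];H)$ is exactly what you need to identify $\pi(\varphi)$ and $\beta(\varphi)$ and to invoke Lemma~\ref{L3}, your argument as written does not close. The paper replaces this step by the asymptotic compactness inequality of Lemma~\ref{L4}: for every $\delta>0$,
\[
\|\varphi_{\ep_i}(t)-\varphi_{\ep_j}(t)\|_H^2 \le \delta\bigl(E_{\ep_i}(\varphi_{\ep_i}(t))+E_{\ep_j}(\varphi_{\ep_j}(t))\bigr) + C_\delta\,\|\varphi_{\ep_i}(t)-\varphi_{\ep_j}(t)\|_{V^*}^2,
\]
which, combined with the uniform energy bound of Lemma~\ref{esti1} and the strong convergence $\varphi_{\ep_j}\to\varphi$ in $C^0([0,T];V^*)$ (this one \emph{does} follow from Aubin--Lions/Ascoli, since $\varphi_\ep$ is bounded in $W^{1,\infty}(0,T;H)\cap W^{2,\infty}(0,T;V^*)$ and $H\hookrightarrow V^*$ compactly), shows that $\{\varphi_{\ep_j}\}$ is Cauchy in $C^0([0,T];H)$. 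This is the genuinely nonlocal compactness ingredient missing from your proposal. Two minor remarks: the identification of $\beta(\varphi)$ does not require a Minty argument, since $\beta$ is single-valued and continuous, so a.e.\ convergence together with the uniform $L^q$ bound and Egorov's theorem suffices; and no Sandier--Serfaty matching of energies is needed, as Lemma~\ref{L3} already yields the identification of the limit of $B_{\ep_j}\varphi_{\ep_j}$ without passing through convergence of the energies.
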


\section{Preliminaries}
\label{Sec:prelim}
We recall here some properties of the family of convolution kernels 
and the main results on the behaviour of the nonlocal terms as $\ep\searrow0$:
therein let us refer to~\cite{DST_JDE2021} for the proofs.
These results will be used later for the proof of Theorem~\ref{maintheorem2}.  

We  define the local energy  contribution $E:H\to[0,+\infty]$ as
\begin{equation}
E(\varphi) := 
\begin{cases}
\frac{1}{2}\int_{\Omega} |\nabla\varphi|^2  & \varphi \in V, 
\\ 
+ \infty  & \varphi \in H \setminus V. 
\end{cases}
\label{pier3}
\end{equation}
Let us point out that the local operator $B:V\to V^*$ introduced in \eqref{pier4}
is the subdifferential of $E_{|_V}$,  i.e., $B$ provides a 
weak formulation of the negative Laplacian with Neumann homogeneous boundary conditions.

The lemma below provides a characterization of the Fr\'echet derivative of the functional~$E_\ep $ defined by \eqref{pier1} as the operator $B_\ep$ in \eqref{pier2}.

\begin{lem}[{\cite[Lemma 3.2]{DST_JDE2021}}]\label{L1}
For all $\ep > 0$, the nonlocal energy 
$E_{\ep} : H \to [0, +\infty)$ is of class $C^1$ 
and its Fr\'echet derivative  $DE_{\ep}$ coincides with the operator $B_\ep : H \to H$.
In particular, it holds, for all $u_1,u_2 \in H$, that
\begin{align*}
(DE_{\ep}(u_1), u_2)_{H} 
&= (B_{\ep}u_1, u_2)_{H}  = 
\int_{\Omega}
       (a_{\ep}(x)u_1(x) - (J_{\ep}\ast u_1)(x))u_2(x)\,dx 
\\
&= \frac{1}{2}
      \int_{\Omega}\int_{\Omega}
        J_{\ep}(x-y)(u_1(x)-u_1(y))(u_2(x)-u_2(y))\,dxdy.
\end{align*}
\end{lem}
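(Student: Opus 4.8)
The plan is to recognise that $E_\ep$ is nothing but a continuous quadratic form on $H$, after which its $C^1$-regularity and the identification of its derivative become standard facts of Hilbert-space calculus. First I would record two structural properties of the kernel that are used throughout. Since $J_\ep(z)=\rho_\ep(|z|)/(\ep^{2-\alpha}|z|^\alpha)$ depends only on $|z|$, the function $J_\ep$ is nonnegative and even, i.e.\ $J_\ep(x-y)=J_\ep(y-x)$; and since $J_\ep\in W^{1,1}(\mathbb R^d)\subset L^1(\mathbb R^d)$, Young's inequality gives $\|J_\ep\ast u\|_H\le\|J_\ep\|_{L^1(\mathbb R^d)}\|u\|_H$, while $a_\ep(x)=\int_\Omega J_\ep(x-y)\,dy\le\|J_\ep\|_{L^1(\mathbb R^d)}$ shows $a_\ep\in L^\infty(\Omega)$. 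Consequently $B_\ep u=a_\ep u-J_\ep\ast u$ from \eqref{pier2} defines a bounded linear operator $B_\ep:H\to H$; its self-adjointness is immediate, because $\int_\Omega a_\ep u_1 u_2$ is symmetric in $(u_1,u_2)$ and, by the evenness of $J_\ep$ together with Fubini's theorem, $\int_\Omega(J_\ep\ast u_1)u_2 = \int_\Omega(J_\ep\ast u_2)u_1$.

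Second, I would rewrite $E_\ep$ in terms of $B_\ep$. Expanding $|u(x)-u(y)|^2=|u(x)|^2-2u(x)u(y)+|u(y)|^2$ under the double integral in \eqref{pier1} and applying Fubini's theorem (legitimate since $u\in H\subset L^1(\Omega)$ on the bounded set $\Omega$ and $J_\ep\in L^1$), the two pure-square terms each integrate to $\int_\Omega a_\ep|u|^2$ — using the evenness of $J_\ep$ for the one carrying $u(y)$ — while the mixed term yields $\int_\Omega u\,(J_\ep\ast u)$. This produces the key identity
\[
E_\ep(u)=\tfrac12\int_\Omega a_\ep|u|^2-\tfrac12\int_\Omega u\,(J_\ep\ast u)=\tfrac12\,(B_\ep u,u)_H,\qquad u\in H,
\]
which in particular shows that $E_\ep$ takes finite values on all of $H$ (nonnegativity being already clear from the original integrand, as $J_\ep\ge0$).

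Finally, I would invoke the elementary differentiability of a quadratic form built on a bounded self-adjoint operator. For $u,h\in H$, using $(B_\ep h,u)_H=(B_\ep u,h)_H$, a direct expansion gives
\[
E_\ep(u+h)-E_\ep(u)=(B_\ep u,h)_H+\tfrac12(B_\ep h,h)_H,
\]
and since $|(B_\ep h,h)_H|\le\|B_\ep\|\,\|h\|_H^2=o(\|h\|_H)$, the Fréchet derivative at $u$ is the bounded linear functional $h\mapsto(B_\ep u,h)_H$, i.e.\ $DE_\ep(u)=B_\ep u$ under the Riesz identification $H\cong H^*$. As $u\mapsto B_\ep u$ is itself linear and bounded, it is continuous, whence $E_\ep\in C^1$. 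The explicit formulas in the statement then follow: the first two equalities are the definition \eqref{pier2} of $B_\ep$, and the double-integral expression is obtained by polarisation, repeating the computation of the second paragraph for the bilinear form $(B_\ep u_1,u_2)_H$ with $|u(x)-u(y)|^2$ replaced by the product $(u_1(x)-u_1(y))(u_2(x)-u_2(y))$. I do not expect any genuine obstacle here; the only points requiring a little care are the repeated appeals to Fubini's theorem and to symmetrisation, all of which are justified by $J_\ep\in L^1(\mathbb R^d)$, the evenness of $J_\ep$, and the boundedness of $\Omega$.
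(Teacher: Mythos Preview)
Your argument is correct: recognising $E_\ep$ as the quadratic form $\tfrac12(B_\ep u,u)_H$ associated to the bounded self-adjoint operator $B_\ep$ on $H$, and then differentiating, is exactly the right approach, and the Fubini/symmetrisation steps are properly justified by $J_\ep\in L^1(\mathbb R^d)$ and the evenness of $J_\ep$. Note, however, that the paper does not supply its own proof of this lemma; it simply quotes the result from \cite[Lemma~3.2]{DST_JDE2021}, so there is no in-paper argument to compare against --- your write-up is in fact the natural fleshing-out of that cited lemma.
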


Some convergence properties of $E_\ep $ and $B_\ep$ as $\ep \searrow 0$ are stated in the 
following two results. We recall that the limit functional $E$ and operator $B$ are defined in \eqref{pier3} and \eqref{pier4}, respectively.

\begin{lem}[{\cite[Lemma 3.3]{DST_JDE2021}}]\label{L2}
For all $v_1,v_2 \in V$ 
it holds that 
\begin{align*}
&\lim_{\ep \searrow 0} E_{\ep}(v_1) = E(v_1), 
\\
&\lim_{\ep \searrow 0} 
  (B_\ep v_1,v_2)_H 
  = \langle Bv_1, v_2 \rangle_{V^{*}, V} .
\end{align*}
Moreover, for all sequence $\{u_\ep\}_{\ep>0} \subset H$ 
and all $u \in H$, 
it holds that 
\begin{align*}
&\sup_{\ep > 0} E_{\ep}(u_{\ep}) < +\infty  
\quad \Rightarrow \quad 
\{u_{\ep}\}_{\ep>0}\ \mbox{is relatively compact in}\ H,
\\[1mm]
&u_{\ep} \to u \quad \mbox{in}\ H \ 
\mbox{as}\ \ep \searrow 0
\quad \Rightarrow \quad 
E(u) \leq \liminf_{\ep \searrow 0} E_{\ep}(u_{\ep}). 
\end{align*}
\end{lem}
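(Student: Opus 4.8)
The statement collects four assertions: pointwise convergence of the energies $E_\ep(v_1)\to E(v_1)$, convergence of the associated bilinear forms, a compactness property, and a lower-semicontinuity inequality. The first two are of Bourgain--Brezis--Mironescu (BBM) type, while the last two constitute the compactness and $\Gamma$-$\liminf$ parts of Ponce's nonlocal-to-local $\Gamma$-convergence theorem; the whole proof rests on the precise scaling of the kernels imposed in {\bf A1}. The plan is to establish the pointwise convergence first, deduce the bilinear statement by polarization, and then treat compactness and the liminf inequality via the nonlocal Riesz--Fr\'echet--Kolmogorov machinery.

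For the pointwise convergence I would begin with $v\in C^2(\overline\Omega)$ and change variables $z=x-y$, so that $E_\ep(v)=\frac14\int_\Omega\int_{\Omega-x}J_\ep(z)|v(x+z)-v(x)|^2\,dz\,dx$. A Taylor expansion $v(x+z)-v(x)=\nabla v(x)\cdot z + O(|z|^2)$ reduces the leading term, after passing to polar coordinates $z=r\sigma$, to the product of the radial integral $\int_0^{+\infty}\rho(s)s^{d+1-\alpha}\,ds$ and the angular integral $\int_{S^{d-1}}|\nabla v(x)\cdot\sigma|^2\,d{\cal H}^{d-1}(\sigma)$. The renormalization $\int_0^{+\infty}\rho(s)s^{d+1-\alpha}\,ds=c_d$, together with the definition of $c_d$, is calibrated exactly so that the product collapses to $\frac12\int_\Omega|\nabla v|^2=E(v)$; the Taylor remainder is controlled using the integrability $s\mapsto\rho(s)s^{d-2-\alpha}\in L^1(0,+\infty)$ from {\bf A1}. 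One then extends the identity from $C^2(\overline\Omega)$ to all of $V$ by density, using the uniform upper bound $E_\ep(v)\le C\|v\|_V^2$. The bilinear statement is a direct consequence: by Lemma~\ref{L1} the form $(B_\ep v_1,v_2)_H$ is symmetric and satisfies $(B_\ep v_1,v_2)_H=\frac12\bigl[E_\ep(v_1+v_2)-E_\ep(v_1-v_2)\bigr]$, so passing to the limit gives $\frac12[E(v_1+v_2)-E(v_1-v_2)]=\int_\Omega\nabla v_1\cdot\nabla v_2=\langle Bv_1,v_2\rangle_{V^*,V}$.

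The compactness assertion is the main obstacle. The strategy is a nonlocal version of the Riesz--Fr\'echet--Kolmogorov criterion: I would show that a uniform bound $\sup_\ep E_\ep(u_\ep)<+\infty$ (together with the uniform $L^2$ bound available in the applications, which is needed because $E_\ep$ annihilates constants) controls the $L^2$-modulus of continuity of translations uniformly in $\ep$, namely $\|u_\ep(\cdot+h)-u_\ep\|_H^2\le\omega(|h|)\sup_\ep E_\ep(u_\ep)$ with $\omega(|h|)\to0$ as $h\to0$ independently of $\ep$. Establishing this estimate for the general kernel scaling $\alpha\in[0,d-1]$, and handling the boundary of $\Omega$ by a suitable extension, is the delicate point; once it is in hand, Riesz--Fr\'echet--Kolmogorov yields relative compactness in $H=L^2(\Omega)$.

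Finally, for the liminf inequality, if $\liminf_\ep E_\ep(u_\ep)=+\infty$ there is nothing to prove; otherwise I would pass to a subsequence along which $E_{\ep_k}(u_{\ep_k})\to L<+\infty$ while $u_{\ep_k}\to u$ in $H$. The uniform translation estimates of the compactness step force $u\in V$, and a lower-semicontinuity argument---weak convergence of the difference quotients combined with the same BBM scaling used for the pointwise limit---gives $E(u)=\frac12\int_\Omega|\nabla u|^2\le L$. This is precisely the $\Gamma$-$\liminf$ component of the convergence, and it closes the proof.
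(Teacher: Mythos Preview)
The paper does not prove this lemma: it is stated in Section~\ref{Sec:prelim} as a preliminary result imported verbatim from \cite[Lemma~3.3]{DST_JDE2021}, with the explicit remark ``therein let us refer to~\cite{DST_JDE2021} for the proofs.'' There is therefore no in-paper argument to compare your proposal against.

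That said, your sketch follows the standard route by which such results are established in the literature (Bourgain--Brezis--Mironescu for the pointwise limit, polarisation for the bilinear form, and Ponce-type compactness/$\Gamma$-liminf for the remaining two assertions), and is consistent with how \cite{DST_JDE2021} and \cite{ponce} proceed. Your observation that the compactness assertion, as literally stated, cannot hold without an accompanying $L^2$ bound (since $E_\ep$ vanishes on constants) is well taken; in the applications of this paper the bound on $\|u_\ep\|_H$ is always available alongside the bound on $E_\ep(u_\ep)$, so the issue is one of phrasing rather than substance.
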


\begin{lem}[{\cite[Proposition 3.1]{DST_JDE2021}}]\label{L3}
Let $\{u_{\ep}\}_{\ep>0} \subset H$ 
and $u \in H$ be such that 
$u_{\ep} \to u$ in $H$ 
and $\sup_{\ep>0}E_{\ep}(u_{\ep})<+\infty$.
Then, 
$u \in V$ and 
$B_{\ep}(u_{\ep}) \to B(u)$ weakly in $V^*$. 
\end{lem}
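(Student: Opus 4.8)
The plan is to combine the lower-semicontinuity and pointwise-convergence properties recorded in Lemma~\ref{L2} with the symmetry of the bilinear form in Lemma~\ref{L1}, so as to reduce the delicate point — the behaviour of $B_\ep$ along the \emph{varying} sequence $u_\ep$ — to the \emph{deterministic} nonlocal-to-local convergence of $B_\ep$ evaluated on a \emph{fixed} smooth test function, where the strong $H$-convergence of $u_\ep$ then enters harmlessly. First I would settle the membership $u\in V$. Since $u_\ep\to u$ in $H$, the lower-semicontinuity statement of Lemma~\ref{L2} gives
\[
E(u)\le\liminf_{\ep\searrow0}E_\ep(u_\ep)\le\sup_{\ep>0}E_\ep(u_\ep)<+\infty,
\]
and by the definition \eqref{pier3} of $E$, finiteness of $E(u)$ forces $u\in V$.

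Next I would establish a uniform bound for $\{B_\ep u_\ep\}$ in $V^*$. For every $w\in V$, Lemma~\ref{L1} together with the Cauchy--Schwarz inequality in the measure $J_\ep(x-y)\,dx\,dy$ yields $|(B_\ep u_\ep,w)_H|\le 2\,E_\ep(u_\ep)^{1/2}E_\ep(w)^{1/2}$. Invoking the uniform nonlocal estimate $\sup_{0<\ep<1}E_\ep(w)\le C\|w\|_V^2$ — of Ponce type, belonging to the same machinery that underlies Lemma~\ref{L2} — together with $\sup_\ep E_\ep(u_\ep)<+\infty$, one obtains $\|B_\ep u_\ep\|_{V^*}\le M$ with $M$ independent of $\ep$. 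Hence $\{B_\ep u_\ep\}$ is bounded in $V^*$, and it then suffices to show that every weak-$V^*$ limit point equals $Bu$.

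To identify the limit I would test against the dense subspace $W\subset V$. Fix $w\in W$. By the symmetry of the form in Lemma~\ref{L1}, $\langle B_\ep u_\ep,w\rangle_{V^*,V}=(B_\ep u_\ep,w)_H=(u_\ep,B_\ep w)_H$. I then claim $B_\ep w\rightharpoonup-\Delta w$ weakly in $H$: the boundedness of $\{B_\ep w\}$ in $H$ follows from the explicit kernel structure of {\bf A1} via a Taylor expansion of $w$, using $\partial_\nu w=0$ to absorb the boundary contributions, while the pointwise relation of Lemma~\ref{L2}, namely $(B_\ep w,v)_H\to\int_\Omega\nabla w\cdot\nabla v=(-\Delta w,v)_H$ for all $v\in V$ (dense in $H$), pins down the weak limit. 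Combining the strong convergence $u_\ep\to u$ in $H$ with this weak convergence gives $(u_\ep,B_\ep w)_H\to(u,-\Delta w)_H$, and since $u\in V$ and $\partial_\nu w=0$ an integration by parts yields $(u,-\Delta w)_H=\int_\Omega\nabla u\cdot\nabla w=\langle Bu,w\rangle_{V^*,V}$, with $B$ as in \eqref{pier4}. Thus $\langle B_\ep u_\ep,w\rangle\to\langle Bu,w\rangle$ for all $w\in W$; the uniform bound of the previous step and the density of $W$ in $V$ then upgrade this to $B_\ep u_\ep\rightharpoonup Bu$ in $V^*$ along the whole family, since every weak limit point coincides with $Bu$.

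The main obstacle is the deterministic claim $B_\ep w\rightharpoonup-\Delta w$ in $H$ for fixed $w\in W$, and specifically the uniform $H$-bound on $B_\ep w$: this is where the precise scaling and integrability hypotheses of {\bf A1} — including the renormalization fixing $c_d$ — are genuinely used, and where the effect of the convolution reaching outside $\Omega$ near $\Gamma$ must be controlled through the Neumann condition $\partial_\nu w=0$. By contrast, the presence of the \emph{varying} sequence $u_\ep$ is not the difficulty here: once $B_\ep$ is transferred onto the fixed smooth $w$ by symmetry, $u_\ep$ contributes only through its strong $H$-convergence, which is exactly what makes the reduction work.
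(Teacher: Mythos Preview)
The paper does not supply a proof of Lemma~\ref{L3}: it is quoted verbatim from \cite[Proposition~3.1]{DST_JDE2021} as a preliminary result, so there is no in-paper argument to compare against. Your strategy is the one used in that reference: lower semicontinuity from Lemma~\ref{L2} gives $u\in V$; the Ponce-type bound (exactly Remark~\ref{remaboutBepEep} here) gives the uniform $V^*$-estimate; symmetry of the bilinear form shifts $B_\ep$ onto a fixed test function $w\in W$; and the whole argument then hinges on the deterministic fact that $B_\ep w$ is bounded in $H$ and converges to $-\Delta w$ for $w\in W$.

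You correctly isolate that last claim as the real work. In \cite{DST_JDE2021} this is their Lemma~3.1, and its proof leans essentially on the $W^{1,1}$ regularity of $J_\ep$ (recorded in assumption {\bf A1}), which lets one write $B_\ep w$ in a form where an $H^2$ bound on $w$ suffices --- the argument is not literally a pointwise Taylor expansion, since $w\in W$ need not be $C^2$. Your sketch captures the interior mechanism and the role of $\partial_\nu w=0$, but the passage from smooth $w$ to general $w\in W$ and the precise boundary treatment are where the $W^{1,1}$ hypothesis and the integrability conditions on $\rho$ in {\bf A1} actually enter; the outline would need to make that dependence explicit to be complete. Architecturally, though, the proof is right.
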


\begin{remark}\label{remaboutBepEep}
We note that there exists a constant $C>0$ such that,
for every $u\in H$, $v\in V$, and $\ep>0$, it holds that 
\begin{align*}
\langle B_{\ep}(u), v \rangle_{V^{*}, V} 
&= (DE_{\ep}(u), v)_{H} 
\\
&= \frac{1}{2}
      \int_{\Omega}\int_{\Omega}
        J_{\ep}(x-y)(u(x)-u(y))(v(x)-v(y))\,dxdy 
\\
&\leq 2\sqrt{E_{\ep}(u)}\sqrt{E_{\ep}(v)}
\\
&\leq C\sqrt{E_{\ep}(u)}\, \|v\|_{V},
\end{align*}
where the last step is a direct consequence 
of \cite[Theorem 1]{BBM}. This readily implies  that 
\[
  \|B_\ep(u)\|_{V^*}\leq C\sqrt{E_\ep(u)} \quad\forall\,u\in H, \quad\forall\,\ep>0.
\]
\end{remark}

The following statement expresses a technical property that will be very useful for our limit procedure.

\begin{lem}[{\cite[Lemma 3.4]{DST_JDE2021}}]\label{L4}
For all $\delta > 0$ there exist constants $C_{\delta} > 0$ 
and $\ep_{\delta} > 0$ such that,
for all sequence 
$\{u_{\ep}\}_{\ep \in (0, \ep_{\delta})} \subset H$,
it holds for all $\ep_{1}, \ep_{2} \in (0, \ep_{\delta})$ that
\[
\|u_{\ep_{1}} - u_{\ep_{2}}\|_{H}^2 
\leq \delta\bigl(E_{\ep_{1}}(u_{\ep_{1}}) + E_{\ep_{2}}(u_{\ep_{2}})\bigr) 
      + C_{\delta}\|u_{\ep_{1}} - u_{\ep_{2}}\|_{V^{*}}^2.
\]
\end{lem}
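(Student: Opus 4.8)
The plan is to establish this inequality by a compactness–contradiction argument, recognizing it as a nonlocal analogue of Ehrling's (Lions') interpolation lemma. In the classical version one uses a compact embedding into the "strong" space; here the role of that compact embedding is played by the \emph{uniform-in-}$\ep$ compactness supplied by the first implication of Lemma~\ref{L2}, with the finiteness of the nonlocal energies $E_{\ep_i}(u_{\ep_i})$ serving as the strong bound and $V^*$ serving as the weak space. No genuinely new estimate is needed beyond Lemma~\ref{L2}: the content is organizational.

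I would argue by contradiction. If the claim failed, there would be a $\delta_0 > 0$ such that for each $n\in\mathbb N$ (negating the statement with $C_\delta = n$ and $\ep_\delta = 1/n$) one can select $\ep_1^{(n)}, \ep_2^{(n)} \in (0, 1/n)$ and functions $u_{\ep_1^{(n)}}, u_{\ep_2^{(n)}} \in H$ with
\[
\|w_n\|_H^2 > \delta_0\bigl(E_{\ep_1^{(n)}}(u_{\ep_1^{(n)}}) + E_{\ep_2^{(n)}}(u_{\ep_2^{(n)}})\bigr) + n\,\|w_n\|_{V^*}^2,
\qquad w_n := u_{\ep_1^{(n)}} - u_{\ep_2^{(n)}}.
\]
In particular $\|w_n\|_H>0$, so I would normalize by dividing each $u_{\ep_i^{(n)}}$ by $\|w_n\|_H$. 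This is legitimate because $E_\ep$ is homogeneous of degree two, so all three terms above scale by the same factor $\|w_n\|_H^2$ and the strict inequality is preserved; after this rescaling I may assume $\|w_n\|_H = 1$ for every $n$.

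With $\|w_n\|_H=1$ the inequality then forces $E_{\ep_i^{(n)}}(u_{\ep_i^{(n)}}) < 1/\delta_0$ for $i=1,2$ and all $n$, together with $\|w_n\|_{V^*}^2 < 1/n \to 0$. Since $\ep_i^{(n)}\searrow0$ and each sequence has uniformly bounded nonlocal energy, the compactness implication of Lemma~\ref{L2} yields subsequences (not relabelled) with $u_{\ep_1^{(n)}} \to u_1$ and $u_{\ep_2^{(n)}} \to u_2$ strongly in $H$, hence $w_n \to u_1 - u_2$ strongly in $H$. But $w_n \to 0$ in $V^*$, and since $H \hookrightarrow V^*$ continuously the strong $H$-limit must agree with the $V^*$-limit inside $V^*$; therefore $u_1 = u_2$ and $w_n \to 0$ strongly in $H$, contradicting $\|w_n\|_H = 1$.

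The one point that requires care — and which I expect to be the main obstacle to writing cleanly rather than a true analytic difficulty — is the invocation of Lemma~\ref{L2}: its compactness statement is phrased for a single family $\{u_\ep\}_{\ep>0}$, whereas here I have two distinct sequences indexed by $n$ along vanishing parameters $\ep_1^{(n)},\ep_2^{(n)}$. I would handle this by reindexing each sequence by its vanishing parameter, applying the lemma separately to obtain relative compactness in $H$, and then passing to a common subsequence; the hypothesis $\sup_n E_{\ep_i^{(n)}}(u_{\ep_i^{(n)}}) < 1/\delta_0$ is exactly what the lemma requires. I would also flag explicitly that the normalization step hinges on the quadratic homogeneity of $E_\ep$, since a naive approach working directly with the non-homogeneous quantities $E_\ep(u_{\ep_i})$ would not admit this rescaling.
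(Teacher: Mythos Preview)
The paper does not prove this lemma: it is quoted directly from \cite[Lemma~3.4]{DST_JDE2021} and no argument is given here. Your contradiction--normalization scheme is the standard Ehrling--Lions route and is essentially how the result is obtained in the cited reference, so the overall strategy is correct.

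There is, however, one genuine gap you should address. After rescaling so that $\|w_n\|_H=1$, you apply the compactness implication of Lemma~\ref{L2} to each sequence $\{\tilde u_{\ep_i^{(n)}}\}_n$ separately. But that compactness statement, as correctly formulated in \cite{ponce} and \cite{DST_JDE2021}, requires the family to be \emph{bounded in $H$} in addition to having uniformly bounded nonlocal energies; the energy alone is only a seminorm control (take $u_\ep$ equal to the constant $1/\ep$: then $E_\ep(u_\ep)=0$ yet $\|u_\ep\|_H\to\infty$). After your normalization you only know $\|\tilde u_{\ep_1^{(n)}}-\tilde u_{\ep_2^{(n)}}\|_H=1$, not that either term is $H$-bounded. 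The fix is cheap but must be written: subtract from both $\tilde u_1^{(n)}$ and $\tilde u_2^{(n)}$ the spatial mean $m_n:=|\Omega|^{-1}\int_\Omega \tilde u_1^{(n)}$; this leaves $w_n$ and both energies unchanged since $E_\ep$ is translation invariant. The shifted first function has zero mean, so a uniform nonlocal Poincar\'e inequality (available from \cite{ponce}, or derivable from Lemma~\ref{L2} itself by a second contradiction argument) gives $\|\tilde u_1^{(n)}-m_n\|_H^2\leq C\,E_{\ep_1^{(n)}}(\tilde u_1^{(n)})\leq C/\delta_0$, and then $\|\tilde u_2^{(n)}-m_n\|_H\leq \|\tilde u_1^{(n)}-m_n\|_H+1$. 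With both sequences now $H$-bounded, your compactness and the remainder of the argument go through verbatim.
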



\section{Proof of Theorem \ref{maintheorem2}}\label{sec:proof}
In this section we prove Theorem~\ref{maintheorem2}:
the proof is split into two parts. Firstly, we derive estimates on the 
solutions of the nonlocal problem, independently of $\ep$. Secondly, 
we  pass to the limit as $\ep\searrow0$ and recover a weak 
solution of the local problem.
Since we are interested in the behaviour as $\ep\searrow0$, as pointed out 
in the statement of Theorem~\ref{maintheorem2}
we restrict here to $\ep\in(0,1)$ with no loss of generality.

\subsection{Uniform estimates for \ref{Pep}}\label{ssec:est}
We collect the estimates for the nonlocal problem \ref{Pep}
in the following lemmata.
\begin{lem}\label{esti1}
There exists a constant $C>0$ such that, for all $\ep\in(0,1)$, it holds
\begin{align*}
\|\theta_{\ep}\|_{L^{\infty}(0, T; H)}^2 +
\|\nabla\theta_{\ep}\|_{L^{2}(0, T; H)}^2 &\leq C,\\
\|\varphi_{\ep}\|_{L^\infty(0, T; H)}^2 + \|(\varphi_{\ep})_t\|_{L^\infty(0, T; H)}^2 
+ \|E_{\ep}(\varphi_{\ep})\|_{L^{\infty}(0, T)} &\leq C,\\
\|\widehat{\beta}(\varphi_{\ep})\|_{L^{\infty}(0, T; L^1(\Omega))} &\leq C .
\end{align*}
\end{lem}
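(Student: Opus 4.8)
The plan is to perform the standard energy estimates for the coupled system \ref{Pep}, being careful that all constants are independent of $\ep$ by exploiting the uniform bounds on the initial data in {\bf A5} and the convergence lemmas on $E_\ep$ in Section~\ref{Sec:prelim}. First I would test the temperature equation $(\theta_\ep)_t+(\varphi_\ep)_t-\Delta\theta_\ep=f$ by $\theta_\ep$ in $H$, obtaining
\[
\frac12\frac{d}{dt}\|\theta_\ep\|_H^2+\|\nabla\theta_\ep\|_H^2
=(f,\theta_\ep)_H-((\varphi_\ep)_t,\theta_\ep)_H.
\]
The term $(f,\theta_\ep)_H$ is handled by Young's inequality and Gronwall; the coupling term $((\varphi_\ep)_t,\theta_\ep)_H$ must be absorbed by summing with the estimate from the phase equation. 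Simultaneously I would test the phase equation $(\varphi_\ep)_{tt}+(\varphi_\ep)_t+B_\ep\varphi_\ep+\beta(\varphi_\ep)+\pi(\varphi_\ep)=\theta_\ep$ by $(\varphi_\ep)_t$ in $H$. Using Lemma~\ref{L1}, the identity $(B_\ep\varphi_\ep,(\varphi_\ep)_t)_H=\frac{d}{dt}E_\ep(\varphi_\ep)$ holds, and $(\beta(\varphi_\ep),(\varphi_\ep)_t)_H=\frac{d}{dt}\int_\Omega\widehat\beta(\varphi_\ep)$; this gives
\[
\frac12\frac{d}{dt}\|(\varphi_\ep)_t\|_H^2+\|(\varphi_\ep)_t\|_H^2
+\frac{d}{dt}E_\ep(\varphi_\ep)+\frac{d}{dt}\!\int_\Omega\widehat\beta(\varphi_\ep)
=(\theta_\ep,(\varphi_\ep)_t)_H-(\pi(\varphi_\ep),(\varphi_\ep)_t)_H.
\]

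Adding the two tested equations cancels the critical coupling terms $\pm((\varphi_\ep)_t,\theta_\ep)_H$, leaving a differential inequality for the total energy
\[
\mathcal{E}_\ep(t):=\tfrac12\|\theta_\ep(t)\|_H^2+\tfrac12\|(\varphi_\ep)_t(t)\|_H^2+E_\ep(\varphi_\ep(t))+\!\int_\Omega\widehat\beta(\varphi_\ep(t)),
\]
of the form $\frac{d}{dt}\mathcal{E}_\ep+\|\nabla\theta_\ep\|_H^2+\|(\varphi_\ep)_t\|_H^2\le \|f\|_H\|\theta_\ep\|_H+ L_\pi\|\varphi_\ep\|_H\|(\varphi_\ep)_t\|_H$, where $L_\pi$ is the Lipschitz constant of $\pi$ (and one uses $|\pi(r)|\le |\pi(0)|+L_\pi|r|$). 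To control $\|\varphi_\ep\|_H$ one writes $\varphi_\ep(t)=\varphi_{0,\ep}+\int_0^t(\varphi_\ep)_s\,ds$, so $\|\varphi_\ep(t)\|_H^2\le C(\|\varphi_{0,\ep}\|_H^2+\int_0^t\|(\varphi_\ep)_s\|_H^2\,ds)$, and the right-hand side is bounded by $C(1+\mathcal{E}_\ep+\int_0^t\mathcal{E}_\ep)$ after Young's inequality. Since $\widehat\beta\ge0$ all terms of $\mathcal{E}_\ep$ are nonnegative, so Gronwall's lemma yields $\sup_{[0,T]}\mathcal{E}_\ep(t)\le C$ with $C$ depending only on $T$, $L_\pi$, $\|f\|_{L^2(0,T;H)}$ and the constant $c_1$ from {\bf A5} — in particular independent of $\ep$. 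Reading off the individual pieces gives exactly the three displayed bounds: the $\theta_\ep$ bounds come directly from $\mathcal{E}_\ep$ plus the integrated dissipation $\int_0^T\|\nabla\theta_\ep\|_H^2$, and the $\varphi_\ep$, $(\varphi_\ep)_t$, $E_\ep(\varphi_\ep)$, $\widehat\beta(\varphi_\ep)$ bounds are the remaining terms.

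The main technical point to be careful about — and the only place where anything beyond routine computation is needed — is the justification that the formal testing is rigorous at the level of regularity guaranteed by Definition~\ref{sol:nonloc}; here the extra regularity of the nonlocal solutions ($\varphi_\ep\in W^{2,\infty}(0,T;H)$, $\beta(\varphi_\ep)\in L^\infty(0,T;H)$) makes all the pairings and the chain-rule identities for $E_\ep$ and $\widehat\beta$ licit, so no density argument is actually required. I do not expect this lemma to contain the real difficulty of the paper; the delicate nonlocal-to-local issues (uniform control that survives the limit $\ep\searrow0$, compactness via Lemmas~\ref{L2}--\ref{L4}, and the second-order-in-time estimates on $(\varphi_\ep)_{tt}$ and $B_\ep\varphi_\ep$ in $V^*$) are presumably deferred to the subsequent lemmata in Section~\ref{sec:proof}. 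For the present statement, the only subtlety is to track that every constant entering Gronwall depends on the data only through {\bf A4} and the $\ep$-uniform bound in {\bf A5}.
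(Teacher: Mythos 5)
Your proposal is correct and follows essentially the same route as the paper: test the heat equation by $\theta_\ep$, test the phase equation by $(\varphi_\ep)_t$ using Lemma~\ref{L1} for the chain rule on $E_\ep$, add so that the coupling terms cancel, and close with Young and Gronwall using the $\ep$-uniform bounds of {\bf A5}. The only cosmetic difference is that the paper controls $\|\varphi_\ep\|_H^2$ by adding the identity $\frac12\|\varphi_\ep(t)\|_H^2=\frac12\|\varphi_{0,\ep}\|_H^2+\int_0^t((\varphi_\ep)_t,\varphi_\ep)_H$ directly into the summed energy balance, whereas you recover it from $\varphi_\ep(t)=\varphi_{0,\ep}+\int_0^t(\varphi_\ep)_s\,ds$; both are equivalent.
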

\begin{proof}
Testing the first equation in \ref{Pep} by $\theta_{\ep}$, 
after integration over $(0, t)$, where $t \in [0, T]$, we obtain
\begin{align*}
&\frac{1}{2}\|\theta_{\ep}(t)\|_{H}^2 
+ \int_{0}^{t} \|\nabla\theta_{\ep}(s)\|_{H}^2\,ds  \\  
&= \frac{1}{2}\|\theta_{0,\ep}\|_{H}^2 
   + \int_{0}^{t} (f(s), \theta_{\ep}(s))_{H}\,ds 
   - \int_{0}^{t} ((\varphi_{\ep})_{t}(s), \theta_{\ep}(s))_{H}\,ds. 
\end{align*}
Moreover, we test the second equation in \ref{Pep} by $(\varphi_{\ep})_{t}$
and integrate over $(0, t)$
to infer that 
\begin{align*}
&\frac{1}{2}\|(\varphi_{\ep})_{t}(t)\|_{H}^2 
+ \int_{0}^{t} \|(\varphi_{\ep})_{t}(s)\|_{H}^2\,ds 
+ E_{\ep}(\varphi_{\ep}(t)) 
+ \int_{\Omega} \widehat{\beta}(\varphi_{\ep}(t)) 
 \\
&= \frac{1}{2}\|v_{0,\ep}\|_{H}^2 
     + E_{\ep}(\varphi_{0, \ep}) 
     + \int_{\Omega} \widehat{\beta}(\varphi_{0, \ep}) 
     - \int_{0}^{t} (\pi(\varphi_{\ep}(s)), (\varphi_{\ep})_{t}(s))_{H}\,ds 
 \\ 
&\,\quad + \int_{0}^{t} ((\varphi_{\ep})_{t}(s), \theta_{\ep}(s))_{H}\,ds.
\end{align*}
Summing up the two equalities together with  
\[
  \frac{1}{2}\|\varphi_{\ep}(t)\|_{H}^2 
= \frac{1}{2}\|\varphi_{0, \ep}\|_{H}^2 
   + \int_{0}^{t} ((\varphi_{\ep})_{t}(s), \varphi_{\ep}(s))_{H}\,ds
\]
yields then 
\begin{align}
\notag
&\frac{1}{2}\|\theta_{\ep}(t)\|_{H}^2 + \frac{1}{2}\|\varphi_{\ep}(t)\|_{H}^2 +
\frac{1}{2}\|(\varphi_{\ep})_{t}(t)\|_{H}^2 +
E_{\ep}(\varphi_{\ep}(t)) 
+ \int_{\Omega} \widehat{\beta}(\varphi_{\ep}(t))\\
\notag
&\qquad+\int_{0}^{t} \|\nabla\theta_{\ep}(s)\|_{H}^2\,ds 
+\int_{0}^{t} \|(\varphi_{\ep})_{t}(s)\|_{H}^2\,ds 
 \\  
\notag
&= \frac{1}{2}\|\theta_{0,\ep}\|_{H}^2 
+ \frac{1}{2}\|\varphi_{0, \ep}\|_{H}^2 +\frac{1}{2}\|v_{0,\ep}\|_{H}^2 
     + E_{\ep}(\varphi_{0, \ep}) 
     + \int_{\Omega} \widehat{\beta}(\varphi_{0, \ep}) \\
&\qquad+ \int_{0}^{t} (f(s), \theta_{\ep}(s))_{H}\,ds 
+ \int_{0}^{t} (\varphi_{\ep}(s)-\pi(\varphi_{\ep}(s)), (\varphi_{\ep})_{t}(s))_{H}\,ds.
\label{a1}
\end{align}
Now, the first five terms on the right-hand side are uniformly bounded in $\ep$
thanks  to assumption {\bf A5} on the initial data.
Moreover, by virtue of assumption {\bf A4} on $f$ 
one has, thanks to the Young inequality, that
\[
   \int_{0}^{t} (f(s), \theta_{\ep}(s))_{H}\,ds \leq
 \frac12\|f\|^2_{L^2(0,T; H)} + \frac12\int_0^t\|\theta_\ep(s)\|_H^2\,ds.
\]
Furthermore, the
assumption {\bf A3} on the Lipschitz continuity of $\pi$
and the Young inequality yield, for some constant $C$ independent of $\ep$,
\begin{align*}
& \int_{0}^{t} (\varphi_{\ep}(s)-\pi(\varphi_{\ep}(s)), (\varphi_{\ep})_{t}(s))_{H}\,ds \leq
 C\int_0^t(1+\|\varphi_\ep(s)\|_H)\|(\varphi_\ep)_t(s)\|_H\,ds\\
 &\qquad{}\leq C\int_0^t\|(\varphi_\ep)_t(s)\|_H^2\,ds
 +\frac{C}2\int_0^t(1+\|\varphi_\ep(s)\|_H^2)\,ds.
\end{align*}
Taking this information into account in the inequality \eqref{a1},
by possibly updating the value of the constant $C$, independently of $\ep$,
we obtain that 
\begin{align*}
&\frac{1}{2}\|\theta_{\ep}(t)\|_{H}^2 + \frac{1}{2}\|\varphi_{\ep}(t)\|_{H}^2+
\frac{1}{2}\|(\varphi_{\ep})_{t}(t)\|_{H}^2 +
E_{\ep}(\varphi_{\ep}(t)) 
+ \int_{\Omega} \widehat{\beta}(\varphi_{\ep}(t))\\
&\qquad+\int_{0}^{t} \|\nabla\theta_{\ep}(s)\|_{H}^2\,ds 
+\int_{0}^{t} \|(\varphi_{\ep})_{t}(s)\|_{H}^2\,ds 
 \\  
&\leq C\left(1+\int_0^t\|\varphi_\ep(s)\|_H^2\,ds +
\int_{0}^{t} \|(\varphi_{\ep})_{t}(s)\|_{H}^2\,ds \right),
\end{align*}
and thanks to the Gronwall lemma we conclude.
\end{proof}

\begin{lem}\label{esti2}
There exists a constant $C>0$ such that, for all $\ep\in(0,1)$, it holds
\begin{equation}
\|(\theta_{\ep})_t\|_{L^2(0, T; H)}^2 
+ \|\theta_{\ep}\|_{L^{\infty}(0, T; V)}^2
+\|\Delta \theta_{\ep}\|_{L^2(0, T; H)}
\leq C . 
\label{pier5}
\end{equation}
\end{lem}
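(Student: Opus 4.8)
The plan is to derive the second energy estimate by differentiating the first equation of \ref{Pep} in time, or equivalently by testing it with $-\Delta\theta_\ep$ (together with a parabolic regularity argument) and with $(\theta_\ep)_t$. More precisely, I would test the first equation in \ref{Pep} by $(\theta_\ep)_t$ and integrate over $(0,t)$: this produces the term $\int_0^t\|(\theta_\ep)_t(s)\|_H^2\,ds$, the gradient term $\frac12\|\nabla\theta_\ep(t)\|_H^2 - \frac12\|\nabla\theta_{0,\ep}\|_H^2$ (controlled in initial data by assumption \textbf{A5}, which gives $\|\theta_{0,\ep}\|_V^2\le c_1$), plus the two source terms $\int_0^t(f(s),(\theta_\ep)_t(s))_H\,ds$ and $-\int_0^t((\varphi_\ep)_t(s),(\theta_\ep)_t(s))_H\,ds$. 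The first of these is absorbed by Young's inequality against $\frac14\int_0^t\|(\theta_\ep)_t\|_H^2$ plus $\|f\|_{L^2(0,T;H)}^2$ (bounded by \textbf{A4}), and the second similarly, using the already-established bound $\|(\varphi_\ep)_t\|_{L^\infty(0,T;H)}\le C$ from Lemma~\ref{esti1}. This yields at once $\|(\theta_\ep)_t\|_{L^2(0,T;H)}^2 + \|\theta_\ep\|_{L^\infty(0,T;V)}^2\le C$.

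For the remaining term $\|\Delta\theta_\ep\|_{L^2(0,T;H)}$, I would rewrite the first equation of \ref{Pep} as $-\Delta\theta_\ep = f - (\theta_\ep)_t - (\varphi_\ep)_t$ a.e. in $\Omega\times(0,T)$, and simply take $H$-norms: the right-hand side is bounded in $L^2(0,T;H)$ by \textbf{A4}, the estimate just obtained on $(\theta_\ep)_t$, and Lemma~\ref{esti1}. Since $\theta_\ep(t)\in W$ for a.e.\ $t$ by Definition~\ref{sol:nonloc} (indeed $\theta_\ep\in L^2(0,T;W)$), the homogeneous Neumann condition holds and $\|\Delta\theta_\ep\|_{L^2(0,T;H)}\le C$ follows; by elliptic regularity on $W$ this also gives, if desired, the full $L^2(0,T;W)$ bound, though the stated estimate only asks for the Laplacian term. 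Note the exponent discrepancy in \eqref{pier5}: the last summand is $\|\Delta\theta_\ep\|_{L^2(0,T;H)}$ to the first power rather than squared, which is harmless since both versions are equivalent up to constants once boundedness is known.

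I would take care, in the test by $(\theta_\ep)_t$, that the manipulation $\int_0^t(-\Delta\theta_\ep,(\theta_\ep)_t)_H = \frac12\|\nabla\theta_\ep(t)\|_H^2-\frac12\|\nabla\theta_{0,\ep}\|_H^2$ is legitimate: this is justified by the regularity $\theta_\ep\in H^1(0,T;H)\cap L^\infty(0,T;V)\cap L^2(0,T;W)$ in Definition~\ref{sol:nonloc}, using the standard chain rule for the map $t\mapsto\frac12\|\nabla\theta_\ep(t)\|_H^2$ under the pairing $W\hookrightarrow V\hookrightarrow H\hookrightarrow V^*$ (equivalently, integrating by parts in space, legitimate since $\partial_\nu\theta_\ep=0$). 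All constants produced depend only on $T$, $\Omega$, $c_1$, $\|f\|_{L^2(0,T;H)}$, and the constant $C$ of Lemma~\ref{esti1}, hence are independent of $\ep$.

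The main obstacle is essentially bookkeeping rather than a genuine difficulty: one must be sure that the test functions used are admissible for the regularity class in Definition~\ref{sol:nonloc} and that no hidden $\ep$-dependence creeps in through the nonlocal operator $B_\ep$ — but in fact $B_\ep$ does not appear at all in the first equation, so this estimate is decoupled from the nonlocal term and is a clean parabolic regularity argument for the temperature equation, with $(\varphi_\ep)_t$ playing the role of an $\ep$-uniformly bounded forcing term supplied by Lemma~\ref{esti1}.
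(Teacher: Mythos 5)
Your proposal is correct and follows essentially the same route as the paper: testing the heat equation by $(\theta_\ep)_t$, absorbing the source terms via Young's inequality together with the bound on $(\varphi_\ep)_t$ from Lemma~\ref{esti1}, and recovering the Laplacian bound by comparison in the equation. The only (implicit) point worth making explicit is that the $L^\infty(0,T;V)$ bound combines the new gradient estimate with the $L^\infty(0,T;H)$ bound already obtained in Lemma~\ref{esti1}.
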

\begin{proof}
We test the first equation in \ref{Pep} by $(\theta_{\ep})_t$ and
integrate over $(0, t)$. With the help of the Young inequality we deduce that
\begin{align*}
  &\int_0^t\|(\theta_\ep)_t(s)\|_H^2\,ds
  +\frac12\|\nabla\theta_\ep(t)\|_H^2\\
  &\quad=\frac12\|\nabla\theta_{0,\ep}\|_H^2
  +\int_0^t(f(s)-(\varphi_\ep)_t(s), (\theta_\ep)_t(s))_H\,ds\\
  &\quad\leq \frac12\|\nabla\theta_{0,\ep}\|_H^2
  +\frac12\int_0^t\|(\theta_\ep)_t(s)\|_H^2\,ds
  +\|f\|^2_{L^2(0,T; H)} + \|(\varphi_\ep)_t\|^2_{L^2(0,T; H)}.
\end{align*}
Rearranging the terms, the estimate \eqref{pier5} for the first two terms follows 
by virtue of assumptions~{\bf A4--A5} and Lemma~\ref{esti1}. Eventually, 
the estimate for the third term in \eqref{pier5} is the consequence of a comparison 
in the first equation of \ref{Pep}.
\end{proof}

\begin{lem}\label{esti4}
Let $q$ be as in assumption~{\bf A2}. Then
there exists a constant $C>0$ such that, for all $\ep\in(0,1)$, it holds
\begin{equation}
\|\beta(\varphi_{\ep})\|_{L^{\infty}(0, T; L^q(\Omega))} \leq C . \label{pier6}
\end{equation}
\end{lem}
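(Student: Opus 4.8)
The plan is to exploit directly the polynomial growth condition on $\beta$ postulated in assumption~{\bf A2} together with the uniform bound on $\widehat\beta(\varphi_\ep)$ already obtained in Lemma~\ref{esti1}, so that the estimate comes out as an immediate corollary with essentially no additional work. Recall that {\bf A2} provides an exponent $q>1$ and a constant $c_\beta>0$ with $|\beta(r)|^q \leq c_\beta\bigl(1+\widehat\beta(r)\bigr)$ for every $r\in\mathbb R$. For a fixed $\ep\in(0,1)$ and for a.e.\ $t\in(0,T)$, one inserts $r=\varphi_\ep(x,t)$ in this inequality and integrates over $x\in\Omega$ to get
\[
\int_\Omega |\beta(\varphi_\ep(x,t))|^q\,dx
\leq c_\beta\left( |\Omega| + \int_\Omega \widehat\beta(\varphi_\ep(x,t))\,dx \right)
\leq c_\beta\left( |\Omega| + \|\widehat\beta(\varphi_\ep)\|_{L^\infty(0,T;L^1(\Omega))} \right).
\]
By the third bound in Lemma~\ref{esti1} the right-hand side is bounded by a constant independent of $\ep$; taking the essential supremum over $t\in(0,T)$ and then the $q$-th root yields \eqref{pier6}.

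The only point worth a brief remark is that the quantity on the left-hand side of \eqref{pier6} is a priori finite for each fixed $\ep$: this follows from the regularity $\beta(\varphi_\ep)\in L^\infty(0,T;H\cap L^q(\Omega))$ recorded in Definition~\ref{sol:nonloc} (which in turn relies on $\varphi_\ep\in W^{1,\infty}(0,T;L^\infty(\Omega))$ and the local Lipschitz continuity of $\beta$), so that $x\mapsto\beta(\varphi_\ep(x,t))$ is genuinely an $L^q(\Omega)$ function for a.e.\ $t$. The content of the lemma is thus purely the $\ep$-\emph{uniformity} of this norm, and that is exactly what the displayed computation delivers. There is no real obstacle: once Lemma~\ref{esti1} and the structural assumption {\bf A2} are in hand, the proof is a two-line consequence.
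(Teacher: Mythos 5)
Your proof is correct and follows exactly the paper's argument: apply the growth condition from \textbf{A2} pointwise, integrate over $\Omega$, and invoke the uniform $L^\infty(0,T;L^1(\Omega))$ bound on $\widehat\beta(\varphi_\ep)$ from Lemma~\ref{esti1}. The added remark on the a priori finiteness of the norm is a harmless (and reasonable) elaboration beyond what the paper writes.
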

\begin{proof}
\luca{By assumption {\bf A2} one has that 
\begin{equation}
  |\beta(\varphi_\ep)|^q \leq c_\beta\left(1+\widehat\beta(\varphi_\ep)\right),
\label{pier7}
\end{equation}
where the right-hand side is bounded in $L^\infty(0,T; L^1(\Omega))$
as a consequence of Lemma~\ref{esti1}, hence \eqref{pier6} follows.}
\end{proof}

\begin{lem}\label{esti5}
There exists a constant $C>0$ such that, for all $\ep\in(0,1)$, it holds
\begin{equation}
\|(\varphi_{\ep})_{tt}\|_{L^{\infty}(0, T; V^*)}
+\|B_\ep\varphi_\ep\|_{L^\infty(0,T; V^*)} \leq C.
\label{pier8}
\end{equation}
\end{lem}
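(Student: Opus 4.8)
The plan is to bound both quantities by a direct comparison argument in the second equation of \ref{Pep}, invoking the estimates of Lemmata~\ref{esti1} and~\ref{esti4} together with Remark~\ref{remaboutBepEep}. No energy-type testing is needed at this stage.

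First I would estimate the nonlocal term: by Remark~\ref{remaboutBepEep} we have $\|B_\ep\varphi_\ep(t)\|_{V^*}\leq C\sqrt{E_\ep(\varphi_\ep(t))}$ for a.e.\ $t\in(0,T)$, and since $\|E_\ep(\varphi_\ep)\|_{L^\infty(0,T)}\leq C$ by Lemma~\ref{esti1}, the bound on $\|B_\ep\varphi_\ep\|_{L^\infty(0,T;V^*)}$ follows immediately.

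Next I would rewrite the second equation of \ref{Pep} in the form
\[
(\varphi_\ep)_{tt} = \theta_\ep - (\varphi_\ep)_t - B_\ep\varphi_\ep - \beta(\varphi_\ep) - \pi(\varphi_\ep)
\qquad\text{a.e.\ in }\Omega\times(0,T),
\]
and estimate each term on the right-hand side in $V^*$, uniformly in $\ep$. The terms $\theta_\ep$ and $(\varphi_\ep)_t$ are bounded in $L^\infty(0,T;H)\hookrightarrow L^\infty(0,T;V^*)$ by Lemma~\ref{esti1}; $B_\ep\varphi_\ep$ has just been handled; $\pi(\varphi_\ep)$ is bounded in $L^\infty(0,T;H)$ since $\pi$ is Lipschitz (assumption~{\bf A3}) and $\varphi_\ep$ is bounded in $L^\infty(0,T;H)$; finally $\beta(\varphi_\ep)$ is bounded in $L^\infty(0,T;L^q(\Omega))$ by Lemma~\ref{esti4}, and $L^q(\Omega)\hookrightarrow V^*$ by the remark following assumption~{\bf A2}. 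Summing these bounds via the triangle inequality in $V^*$ and taking the essential supremum over $t\in(0,T)$ yields the claimed estimate on $\|(\varphi_\ep)_{tt}\|_{L^\infty(0,T;V^*)}$.

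There is no substantial obstacle here: the proof is a routine comparison argument resting entirely on estimates already established. The only point worth flagging is the treatment of $\beta(\varphi_\ep)$ — under the mere growth hypothesis {\bf A2} one cannot expect it to lie in $L^\infty(0,T;H)$, so the embedding $L^q(\Omega)\hookrightarrow V^*$, and with it the restriction $q\geq 6/5$ in dimension $d=3$, is precisely what makes this step go through.
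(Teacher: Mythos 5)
Your proof is correct and follows exactly the paper's argument: the bound on $B_\ep\varphi_\ep$ via Remark~\ref{remaboutBepEep} and Lemma~\ref{esti1}, followed by a comparison in the second equation of \ref{Pep} using Lemmas~\ref{esti1}--\ref{esti4} and the embedding $L^q(\Omega)\hookrightarrow V^*$. Your closing remark on why $\beta(\varphi_\ep)$ must be handled through $L^q(\Omega)\hookrightarrow V^*$ rather than $H$ is a correct and worthwhile observation.
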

\begin{proof}
Recalling Remark~\ref{remaboutBepEep}, from Lemma~\ref{esti1} one has 
straightaway that 
\[
  \|B_\ep\varphi_\ep\|_{L^\infty(0,T; V^*)} \leq C.
\]
Moreover, recalling that $L^q(\Omega)\hookrightarrow V^*$ (cf.~ assumption {\bf A2}),
by comparison in the second equation of~\ref{Pep} one infers that
\[
  \|(\varphi_{\ep})_{tt}\|_{L^{\infty}(0, T; V^*)}\leq C
\]
owing to the estimates in Lemmas~\ref{esti1}--\ref{esti4}, and this yields \eqref{pier8}.
\end{proof}

\subsection{Passage to the limit}
We are now ready to pass to the limit as $\ep\searrow0$ in the problem~\ref{Pep} and complete the proof of 
Theorem~\ref{maintheorem2}.

\begin{proof}[Proof of Theorem~\ref{maintheorem2}]
By recalling Lemmas \ref{esti1}-\ref{esti5}, the compact embeddings 
$W \hookrightarrow V \hookrightarrow H \hookrightarrow V^*$, and
the Aubin--Lions compactness results (see, e.g., \cite[Sect.~8, Cor.~4]{Sim87}), 
it is straightforward to infer that there exist some functions 
$\theta$, $\varphi$, $\eta$, $\xi$ 
with 
\begin{align*}
&\theta \in H^1(0, T; H) 
                                \cap L^{\infty}(0, T; V) \cap L^{2}(0, T; W), 
\\ 
&\varphi \in W^{2, \infty}(0, T; V^{*}) 
                               \cap W^{1, \infty}(0, T; H), 
\\
&\eta \in L^{\infty}(0, T; V^{*}), 
\\
&\xi \in L^{\infty}(0, T; L^{q}(\Omega)),    
\end{align*}
and a sequence $\{\ep_j\}_{j\in\mathbb N}$ such that, 
as $j\to\infty$, $\ep_j\searrow0$ and
\begin{align}
\theta_{\ep_j} \to \theta 
\quad&\mbox{weakly in } H^1(0, T; H)  \cap L^2(0, T; W), 
\label{weak1}
\\
\theta_{\ep_j} \to \theta 
\quad&\mbox{weakly* in } L^\infty(0,T; V), 
\label{weak1'}
\\
\theta_{\ep_j} \to \theta
\quad&\mbox{strongly in } C^0([0, T]; H)\cap L^2(0,T;V),
\label{strong1}
\\
\varphi_{\ep_j} \to \varphi
\quad&\mbox{weakly* in }  W^{2, \infty}(0, T; V^{*}) \cap W^{1, \infty}(0, T; H),
\label{weak2}
\\
\varphi_{\ep_j} \to \varphi 
\quad&\mbox{strongly in } C^1([0, T]; V^*), 
\label{strong2}
\\
B_{\ep_j}\varphi_{\ep_j} \to \eta 
\quad&\mbox{weakly* in } L^{\infty}(0, T; V^*), 
\label{weak4}
\\
\beta(\varphi_{\ep_j}) \to \xi 
\quad&\mbox{weakly* in } L^{\infty}(0, T; L^q(\Omega)).
\label{weak5}
\end{align}
Now, from Lemmas \ref{L4} and \ref{esti1}, it follows that 
for every $\delta>0$ there exist $c_\delta>0$ and $\ep_\delta>0$
such that, for all $i,j\in\mathbb N$ with $\ep_{i}, \ep_j<\ep_\delta$, one has
\[
  \|\varphi_{\ep_i}-\varphi_{\ep_j}\|_{C^0([0,T]; H)}^2
  \leq C\delta + c_\delta\|\varphi_{\ep_i}-\varphi_{\ep_j}\|_{C^0([0,T]; V^*)}^2.
\]
Hence, by  arbitrariness of $\delta$ and the  strong convergence 
\eqref{strong2} we also deduce that, as $j\to\infty$,
\begin{equation}\label{strong4}
\varphi_{\ep_j} \to \varphi 
\quad \mbox{strongly in } C^0([0, T]; H).
\end{equation}
This implies, by the Lipschitz continuity of $\pi$ assumed in {\bf A3}, that
\begin{equation}\label{strong5}
\pi(\varphi_{\ep_j}) \to \pi(\varphi) 
\quad \mbox{strongly in } C^0 ([0, T]; H).
\end{equation}

Now, from Lemma~\ref{L2}, the strong convergence \eqref{strong4},
and Lemma~\ref{esti1} we have
\[
  E(\varphi(t))\leq\liminf_{j\to\infty}E_{\ep_j}(\varphi_{\ep_j}(t)) \leq C \quad\forall\,t\in[0,T],
\]
which implies in turn, by definition of $E$, that 
\begin{equation}\label{phiV}
\varphi\in L^\infty(0,T; V) \qquad\text{and}\qquad
\varphi(t)\in V\quad\forall\,t\in[0,T].
\end{equation}
Furthermore, for every $t\in[0,T]$, one has that 
$\varphi_{\ep_j}(t)\to\varphi(t)$ in $H$ by \eqref{strong4} and 
$\sup_{j\in\mathbb N} \|E_{\ep_j}(\varphi_{\ep_j})\|_{L^\infty (0,T)} \leq C<+\infty$ by Lemma~\ref{esti1}:
hence, Lemma~\ref{L3} ensures that, as $j\to\infty$,
\begin{equation}\label{b1}
B_{\ep_j}\varphi_{\ep_j}(t) \to B\varphi(t)
\quad \mbox{weakly in } V^*, \quad\mbox{for a.a.}\ t \in (0, T).
\end{equation}
We claim that \eqref{b1} implies, thanks to \eqref{weak4}, that
\begin{equation}\label{etaBvarphi}
\eta(t) = B\varphi(t) \quad\mbox{in } V^{*}, \quad
\mbox{for a.a.}\ t \in (0, T).
\end{equation}
Indeed, let $\psi \in C_{c}^{\infty}(\Omega\times(0, T))$. 
Then, by the convergence \eqref{b1} one has that
\[
  \langle B_{\ep_j}\varphi_{\ep_j}(t), F^{-1}\psi(t) \rangle_{V^{*}, V} 
\to \langle B\varphi(t), F^{-1}\psi(t) \rangle_{V^{*}, V} 
\quad \mbox{for a.a.}\ t \in (0, T),
\]
while by the Remark~\ref{remaboutBepEep} and
Lemma~\ref{esti1} it holds that
\[
|\langle B_{\ep_j}\varphi_{\ep_j}(t), F^{-1}\psi(t) \rangle_{V^{*}, V}| 
 \leq C\|\psi\|_{L^\infty(0,T; V^*)} \quad\mbox{for a.a.}\ t \in (0, T).
\]
Hence, it follows from 
the Lebesgue dominated convergence theorem 
that 
\begin{equation}\label{b4}
\lim_{j\to\infty}
  \int_{0}^{T}
    \left|\langle B_{\ep_j}\varphi_{\ep_j}(t), F^{-1}\psi(t) \rangle_{V^{*}, V} 
           - \langle B\varphi(t), F^{-1}\psi(t) \rangle_{V^{*}, V}\right|\,dt 
= 0.
\end{equation}
Combining \eqref{weak4} and \eqref{b4} implies then that, as $j\to\infty$,
\begin{align*}
&\left|
  \int_{\Omega\times(0, T)} 
               (F^{-1}(B\varphi - \eta))(x, t)\psi(x, t)\,dxdt  
  \right|
\\
&= \left|
     \int_{0}^{T} (\psi(t), F^{-1}(B\varphi(t) - \eta(t)))_{H}\,dt 
    \right| 
= \left|
      \int_{0}^{T} 
         \langle \psi(t), F^{-1}(B\varphi(t) - \eta(t)) \rangle_{V^{*}, V}\,dt 
   \right| 
\\
&= \left|
      \int_{0}^{T} 
        \langle B\varphi(t) - \eta(t), F^{-1}\psi(t) \rangle_{V^{*}, V}\,dt 
         \right| 
\\
&\leq \left|
      \int_{0}^{T} 
        \langle B\varphi(t) - B_{\ep_j}\varphi_{\ep_j}(t), 
                                                  F^{-1}\psi(t) \rangle_{V^{*}, V}\,dt 
         \right| 
\\ 
      &\,\quad + \left|
      \int_{0}^{T} 
        \langle B_{\ep_j}\varphi_{\ep_j}(t) - \eta(t), 
                                            F^{-1}\psi(t) \rangle_{V^{*}, V}\,dt 
         \right| 
\to 0 .
\end{align*}
Hence we can conclude that 
\begin{equation*}
\int_{\Omega\times(0, T)} (F^{-1}(B\varphi - \eta))(x, t)\psi(x, t)\,dxdt 
= 0 \quad \forall\,\psi \in C_{c}^{\infty}(\Omega\times(0, T)),
\end{equation*}
whence, by the arbitrariness of $\psi$, we deduce that
\begin{equation*}
F^{-1}(B\varphi - \eta) = 0 
\quad \mbox{a.e.\ in}\ \Omega\times(0, T).
\end{equation*}
By definition of $F$, this leads to \eqref{etaBvarphi}, as desired.

Eventually, we are left with verifying that 
\begin{equation}\label{xibeta}
\xi = \beta(\varphi) \quad \mbox{a.e.\ in}\ \Omega\times(0, T).
\end{equation}
To this end, note that since $\beta$ is single-valued and continuous,
from the strong convergence \eqref{strong4} it holds that, at least for a subsequence,
\[
  \beta(\varphi_{\ep_j})\to \beta(\varphi) \quad \mbox{a.e.\ in}\ \Omega\times(0, T).
\]
By the Severini--Egorov theorem and the boundedness property in \eqref{pier6}
we infer~that (the reader may see \cite[Lemme~1.3, p.12]{Lio69} for some detail)
\[
  \beta(\varphi_{\ep_j})\to \beta(\varphi) \quad \text{weakly in } L^q(\Omega\times(0,T))
\]
and 
\[
  \beta(\varphi_{\ep_j})\to \beta(\varphi) \quad \text{strongly in } L^p(\Omega\times(0,T))
  \quad\forall\,p\in[1,q).
\]

These conditions, together with \eqref{weak5}, readily ensure the identification \eqref{xibeta}.

In order to conclude, we only need to let $\ep \searrow0$
in the nonlocal system \ref{Pep}: actually, we do it when $\ep$ takes the values $\ep_j$ 
of the subsequence. To this end, note that the convergences 
proved above allow to pass to the limit directly 
in the first equation and in the equations for the initial data.
As for the second equation in \ref{Pep},
let $\psi \in C_{c}^{\infty}(\Omega\times(0, T))$ be arbitrary. 
Then, testing the second equation of \ref{Pep} by $F^{-1}\psi$ and integrating on $[0,T]$,
we have 
\begin{align*}
0 &= \int_{0}^{T} \langle (\varphi_{\ep_j})_{tt}(t), 
                                   F^{-1}\psi(t) \rangle_{V^{*}, V}\,dt 
     + \int_{0}^{T} ((\varphi_{\ep_j})_{t}(t), F^{-1}\psi(t))_{H}\,dt 
\notag \\
 &\,\quad + \int_{0}^{T} \langle B_{\ep_j}\varphi_{\ep_j}(t), 
                                       F^{-1}\psi(t) \rangle_{V^{*}, V}\,dt 
      + \int_{\Omega\times(0,T)} \beta(\varphi_{\ep_j}(x,t))
                   F^{-1}\psi(x,t)\,dxdt
\notag \\ 
   &\,\quad + \int_{0}^{t} (\pi(\varphi_{\ep_j}(t)), F^{-1}\psi(t))_{H}\,dt 
                - \int_{0}^{t} (\theta_{\ep_j}(t), F^{-1}\psi(t))_{H}\,dt.
\end{align*}
Hence, by letting $j\to\infty$,
we deduce from \eqref{weak1}--\eqref{strong5}, 
\eqref{etaBvarphi}, and \eqref{xibeta} that 
\begin{equation}\label{b9} 
\int_{\Omega\times(0, T)} 
 (F^{-1}(\varphi_{tt} + \varphi_{t} + B\varphi
       + \beta(\varphi) + \pi(\varphi) - \theta))(x, t)\psi(x, t)\,dxdt
= 0
\end{equation}
for all $\psi \in C_{c}^{\infty}(\Omega\times(0, T))$,
which readily implies the variational formulation of problem~\ref{P}.
Therefore, $(\theta,\varphi)$ is a weak solution of~\ref{P}
in the sense of Definition~\ref{sol:loc}, and this concludes the proof.
\end{proof}

\smallskip
\section*{Acknowledgments}
The research of PC has been performed within the framework of the MIUR-PRIN Grant
2020F3NCPX ``Mathematics for industry 4.0 (Math4I4)''. The research of SK is supported 
by JSPS KAKENHI Grant Number JP23K12990. The research of 
LS is part of the activities of 
``Dipartimento di Eccellenza 2023-2027'' of Politecnico di Milano.
Moreover, PC and LS are members of of the GNAMPA (Gruppo Nazionale per l’Analisi Matematica, la Probabilit\`a e le loro Applicazioni) of INdAM (Istituto Nazionale di Alta Matematica).


%
%
%


\begin{thebibliography}{99} 

\bibitem{ab-bos-grass-NLCH}
H.~Abels, S.~Bosia, and M.~Grasselli,
\newblock {C}ahn-{H}illiard equation with nonlocal singular free energies.
\newblock {\em Ann. Mat. Pura Appl. (4)}, 194(4):1071--1106, 2015.

\bibitem{AH}
H.~Abels and C.~Hurm,
\newblock Strong nonlocal-to-local convergence of the {C}ahn-{H}illiard
  equation and its operator.
  Preprint arXiv:2307.02264  [math.AP], 2023, pp.~1--25.

\bibitem{AT}
H.~Abels and Y.~Terasawa,
\newblock Convergence of a nonlocal to a local diffuse interface model for
  two-phase flow with unmatched densities.
\newblock {\em Discrete Contin. Dyn. Syst. Ser. S}, 15(8):1871--1881, 2022.

\bibitem{BBM}
J. Bourgain, H. Brezis, and P. Mironescu, 
{\it Another look at Sobolev spaces}, 
in: Optimal Control and Partial Differential Equations, IOS, Amsterdam, 2001, 
pp.~439--455.

\bibitem{ca-el-sk-DEG}
J.~A. Carrillo, C.~Elbar, and J.~Skrzeczkowski,
\newblock Degenerate {C}ahn-{H}illiard systems: From nonlocal to local.
Preprint arXiv:2303.11929 [math.AP], 2023, pp.~1--30.

\bibitem{cgi}
P.  Colli,  M.  Grasselli and  A.  Ito,
\newblock On  a  parabolic-hyperbolic  Penrose-Fife  phase-field  system.
\newblock {\em Electron. J. Differential Equations} 2002, No. 100, 30 pp. 
(Erratum:  {\em Electron. J. Differential Equations} 2002, No. 100, 32 pp.).

%
\bibitem{DRST}
E.~Davoli, H.~Ranetbauer, L.~Scarpa, and L.~Trussardi,
\newblock Degenerate nonlocal {C}ahn-{H}illiard equations: well-posedness,
  regularity and local asymptotics.
\newblock {\em Ann. Inst. H. Poincar\'{e} C Anal. Non Lin\'{e}aire},
  37(3):627--651, 2020.
  
\bibitem{DRST2023}
E.~Davoli, E.~Rocca, L.~Scarpa, and L.~Trussardi,
\newblock Local asymptotics and optimal control 
for a viscous Cahn-Hilliard-Reaction-Diffusion model for tumor growth.
Preprint arXiv:2311.10457 [math.AP], 2023, pp.~1--30.

\bibitem{DST2021}
E.~Davoli, L.~Scarpa, and L.~Trussardi,
\newblock Nonlocal-to-local convergence of Cahn-Hilliard equations: 
Neumann boundary conditions and viscosity terms.
\newblock{\em Arch.~Ration.~Mech.~Anal.}, 239(1):117--149, 2021. 
%
\bibitem{DST_JDE2021}
E.~Davoli, L.~Scarpa, and L.~Trussardi,
\newblock Local asymptotics for nonlocal convective Cahn-Hilliard equations 
with $W^{1, 1}$ kernel and singular potential. 
\newblock{\em J. Differential Equations}, 289:35--58, 2021.

\bibitem{el-sk-DEG}
C.~Elbar and J.~Skrzeczkowski,
\newblock Degenerate {C}ahn-{H}illiard equation: from nonlocal to local.
\newblock {\em J.~Differential Equations}, 364:576--611, 2023.

\bibitem{frig-gal-gras-DEG}
S.~Frigeri, C.~G. Gal, and M.~Grasselli,
\newblock Regularity results for the nonlocal {C}ahn-{H}illiard equation with
  singular potential and degenerate mobility.
\newblock {\em J.~Differential Equations}, 287:295--328, 2021.

\bibitem{gal-gior-grass-NLCH}
C.~G. Gal, A.~Giorgini, and M.~Grasselli,
\newblock The nonlocal {C}ahn-{H}illiard equation with singular potential:
  well-posedness, regularity and strict separation property.
\newblock {\em J. Differential Equations}, 263(9):5253--5297, 2017.

\bibitem{gal-gior-gras-SEP}
C.~G. Gal, A.~Giorgini, and M.~Grasselli,
\newblock The separation property for 2D {C}ahn-{H}illiard equations: local,
  nonlocal and fractional energy cases.
\newblock {\em Discrete Contin. Dyn. Syst.}, 43(6):2270--2304,
  2023.

\bibitem{gal-grass-NLCH}
C.~G. Gal and M.~Grasselli,
\newblock Longtime behavior of nonlocal {C}ahn-{H}illiard equations.
\newblock {\em Discrete Contin. Dyn. Syst.}, 34(1):145--179, 2014.

\bibitem{GMPZ}
M.~Grasselli, A.~Miranville, V.~Pata, and S.~Zelik, 
\newblock Well-posedness and long time behavior of a parabolic-hyperbolic phase-field system with singular potentials.
\newblock {\em Math. Nachr.}  280(13-14):1475--1509, 2007.

\bibitem{jiang}
J.~Jiang, 
\newblock Convergence to equilibrium for a fully hyperbolic phase-field model with Cattaneo heat flux law.
\newblock {\em Math. Methods Appl. Sci.}  32(9):1156--1182, 2009.

\bibitem{kur1}
S. Kurima, 
\newblock Time discretization of an initial value problem for a simultaneous abstract evolution equation applying to parabolic-hyperbolic phase-field systems.
\newblock
{\em ESAIM Math. Model. Numer. Anal.}  54(3):977--1002, 2020.
		
\bibitem{K7}
S. Kurima,
\newblock Time discretization of a nonlocal phase-field system 
with inertial term.
\newblock {\em Matematiche (Catania)}, 77(1):47--66, 2022.   

\bibitem{kur2}
S. Kurima, 
\newblock Existence for a singular nonlocal phase field system with inertial term. 
\newblock
{\em Acta Appl. Math.} 178, Paper No. 10, 20 pp., 2022.

\luca{\bibitem{kur3}
S. Kurima, 
\newblock Nonlocal to local convergence of singular phase field systems of conserved type.
\newblock
{\em Adv. Math. Sci. Appl.},  31(2): -481--500, 2022. }%
	
\bibitem{Lio69} 
	J.\ L.\ {L}ions, 
	\newblock {\it Quelques m\'ethodes de r\'esolution des probl\`emes aux limites non lin\'eaires}, 
	\newblock Dunod Gauthier-Villas, Paris, 1968.

\bibitem{MRT18}
S.~Melchionna, H.~Ranetbauer, L.~Scarpa, and L.~Trussardi,
\newblock From nonlocal to local {C}ahn-{H}illiard equation.
\newblock {\em Adv. Math. Sci. Appl.}, 28(2):197--211, 2019.

\bibitem{mirqui}
A.~Miranville and R.~Quintanilla, 
\newblock A generalization of the Caginalp phase-field system based on the Cattaneo law.
\newblock {\em Nonlinear Anal.}  71(5-6):2278--2290, 2009.

\bibitem{poiatti-SEP}
A.~Poiatti,
\newblock The 3d strict separation property for the nonlocal {C}ahn-{H}illiard
  equation with singular potential.
Preprint arXiv:2303.07745 [math.AP], 2023, pp.~1--27.

\bibitem{ponce}
A.~C. Ponce,
\newblock A new approach to {S}obolev spaces and connections to
  {$\Gamma$}-convergence.
\newblock {\em Calc. Var. Partial Differential Equations}, 19(3):229--255,
  2004.
  
\bibitem{rocschi} 	
E.~Rocca and G.~Schimperna, 
\newblock Global attractor for a parabolic-hyperbolic Penrose-Fife phase field system.
\newblock {\em Discrete Contin. Dyn. Syst.}  15(4):1193--1214, 2006.

\bibitem{sand-serf}
E.~Sandier and S.~Serfaty,
\newblock Gamma-convergence of gradient flows with applications to
  {G}inzburg-{L}andau.
\newblock {\em Comm. Pure Appl. Math.}, 57(12):1627--1672, 2004.

\bibitem{Sim87}
	 J.\ {S}imon, 
	\newblock Compact sets in the space $L^p(0,T;B)$, 
	\newblock {\em Ann.\ Mat.\ Pura.\ Appl.\ (4)}, 146:65--96, 1987.

\end{thebibliography}
\end{document}